\newtheorem{theorem}{Theorem}[section]
\newtheorem{conjecture}[theorem]{Conjecture}
\newtheorem{corollary}[theorem]{Corollary}
\newtheorem{lemma}[theorem]{Lemma}
\newtheorem{problem}{Problem}[section]
\newtheorem{proposition}[theorem]{Proposition}
\theoremstyle{remark}
\numberwithin{equation}{section}
\newcommand{\Ccal}{\mathscr{C}}
\newcommand{\Ecal}{\mathscr{E}}
\newcommand{\Ncal}{\mathscr{N}}
\newcommand{\Ocal}{\mathscr{O}}
\newcommand{\Pcal}{\mathscr{P}}
\newcommand{\Vcal}{\mathscr{V}}
\newcommand{\Pro}{\mathbb{P}}
\newcommand{\Z}{\mathbb{Z}}
\newcommand{\C}{\mathbb{C}}
\newcommand{\Q}{\mathbb{Q}}
\newcommand{\R}{\mathbb{R}}
\newcommand{\N}{\mathbb{N}}
\newcommand{\supp}{\mathrm{supp}}
\newcommand{\Pole}{\mathrm{Pole}}
\newcommand{\Zero}{\mathrm{Zero}}
\newcommand{\Deg}{\mathrm{Deg}}
\newcommand{\mult}{\mathrm{mult}}
\newcommand{\Div}{\mathrm{Div}}
\newcommand{\Der}{\mathrm{Der}}
\newcommand{\ddim}{\mathrm{ddim}}
\newcommand{\alg}{\mathrm{alg}}
  \DeclareFontFamily{U}{wncy}{}
    \DeclareFontShape{U}{wncy}{m}{n}{<->wncyr10}{}
    \DeclareSymbolFont{mcy}{U}{wncy}{m}{n}
    \DeclareMathSymbol{\Sha}{\mathord}{mcy}{"58}
\begin{document}
\title[Non-Diophantine sets in rings of functions]{Non-Diophantine sets in rings of functions}

\author{Natalia Garcia-Fritz}
\address{ Departamento de Matem\'aticas,
Pontificia Universidad Cat\'olica de Chile.
Facultad de Matem\'aticas,
4860 Av.\ Vicu\~na Mackenna,
Macul, RM, Chile}
\email[N. Garcia-Fritz]{natalia.garcia@mat.uc.cl}%

\author{Hector Pasten}
\address{ Departamento de Matem\'aticas,
Pontificia Universidad Cat\'olica de Chile.
Facultad de Matem\'aticas,
4860 Av.\ Vicu\~na Mackenna,
Macul, RM, Chile}
\email[H. Pasten]{hector.pasten@mat.uc.cl}%

\author{Thanases Pheidas}
\address{Department of Mathematics and Applied Mathematics and Laboratory of Theoretical Mathematics, University of Crete, Greece}
\email[T. Pheidas]{pheidas@uoc.gr}%

\thanks{N.G.-F. was supported by ANID Fondecyt Regular grant 1211004 from Chile. H.P was supported by ANID (ex CONICYT) FONDECYT Regular grant 1190442 from Chile. The three authors were supported by the NSF Grant No. DMS-1928930 while at the MSRI, Berkeley, in Summer 2022.}
\date{\today}
\subjclass[2010]{Primary: 11U05; Secondary: 11C08, 11R58} %
\keywords{Diophantine sets, polynomials, function fields}%

\begin{abstract} Except for a limited number of cases, a complete classification of the Diophantine sets of polynomial rings and fields of rational functions seems out of reach at present. We contribute to this problem by proving that several natural sets and relations over these structures are not Diophantine.
\end{abstract}

\maketitle



\section{Introduction} 

Let $R$ be a (commutative, unitary) ring and let $n\ge 1$. A set $X\subseteq R^n$ is \emph{Diophantine} over $R$ if there are polynomials $F_1,...,F_r\in R[x_1,...,x_n, y_1,...,y_m]$ such that
$$
X = \{{\bf a}\in R^n : \mbox{ there is }{\bf b}\in R^m\mbox{ such that }F_j({\bf a},{\bf b})=0\mbox{ for each }1\le j\le r\}.
$$
Equivalently, $X\subseteq R^n$ is Diophantine over $R$ if it is positive existentially definable over $R$ with parameters from $R$.

After the work of Davis, Putnam, Robinson \cite{DPR} and Matiyasevich \cite{Mat} we have a satisfactory description of the Diophantine subsets of $\Z^n$ over $\Z$; they are precisely the listable subsets. On the other hand, we know very little about the Diophantine subsets of $\Q^n$ over $\Q$; in particular, it is not known whether $\Z$ is Diophantine in $\Q$.

There is a well-known analogy between numbers and functions, where polynomial rings are analogous to $\Z$ and fields of rational functions are analogous to $\Q$. Motivated by this analogy, in this work we investigate the Diophantine sets of polynomial rings and fields of rational functions, by producing several natural subsets that turn out to be non-Diophantine.

\emph{Notation:} In this article, $t$ and $z$ denote independent variables that are transcendental over the relevant rings and fields (we use them to define  polynomials, rational functions, and power series).


\subsection{Polynomials}\label{SecPolyIntro} In the setting of polynomials let us recall the results of Demeyer \cite{DemeyerFp, DemeyerNF} stating that if $k$ is a finite field or a number field, then the polynomial ring $k[z]$ has the DPRM property: its listable sets are the same as its Diophantine sets. (See \cite{Pasten} for a more detailed discussion on other structures having the DPRM property.) Thus, in these cases we have a complete description of the Diophantine sets over such polynomial rings. However, in general, not much is known about the Diophantine sets of polynomial rings over other fields. For instance we have the following open problem pointed out to us by Scanlon:
 
\begin{problem}\label{ProblemDeg} Determine whether the set
$$
\{(f,g)\in \C[z]^2 : \deg(f)=\deg(g)\}
$$
is Diophantine over $\C[z]$ or not.
\end{problem}

Here we remark that the analogous set with $\C$ replaced by $\R$ is in fact Diophantine over $\R[z]$ by results of Denef, see Section \ref{SecDioph}.

Let $A$ be a ring and let $R=A[z]$ be the corresponding polynomial ring in one variable. For $X\subseteq R$ and an integer $\alpha\ge 0$ we define $X_\alpha = \{f\in X : \deg(f)\le \alpha\}$. Note that by taking coefficients we have an identification $R_\alpha=A^{\alpha+1}$ and in this way we can talk about Diophantine subsets of $R_\alpha$ over $A$. Our first result is the following general observation that will be useful for detecting some simple non-Diophantine sets over polynomial rings.
\begin{theorem}\label{ThmPoly} Let $A$ be a ring.  Let $D\subseteq A[z]$ be Diophantine over $A[z]$ and let $\alpha \ge 0$. Then $D_\alpha\subseteq A^{\alpha+1}$  is a countable union of Diophantine sets over $A$.
\end{theorem}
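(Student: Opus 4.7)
The plan is to leverage the given Diophantine presentation of $D$ over $A[z]$ and expose the required countable union by stratifying the existential witnesses in $A[z]$ according to their degrees. Fix polynomials $F_1,\dots,F_r \in A[z][x,y_1,\dots,y_m]$ witnessing that $D$ is Diophantine over $A[z]$, so that $f \in A[z]$ lies in $D$ precisely when there exist $b_1,\dots,b_m \in A[z]$ with $F_j(f,b_1,\dots,b_m)=0$ for each $j$.

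Next I would write a general element of $A[z]_\alpha$ as $f = \sum_{k=0}^\alpha a_k z^k$ with $a_k \in A$ and, for each tuple $\beta=(\beta_1,\dots,\beta_m)\in\N^m$, introduce the set $S_\beta \subseteq A^{\alpha+1}$ consisting of those $(a_0,\dots,a_\alpha)$ for which the system $F_j(f,b_1,\dots,b_m)=0$ admits a solution with $\deg b_i \le \beta_i$ for all $i$. Since any actual witness has some finite degree, this gives
$$D_\alpha \;=\; \bigcup_{\beta\in\N^m} S_\beta,$$
and the index set is countable.

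The main step is to verify that each $S_\beta$ is Diophantine over $A$. To do this, I would introduce fresh variables $c_{i,k}$ for $1\le i\le m$ and $0\le k\le\beta_i$ parameterising $b_i = \sum_{k=0}^{\beta_i} c_{i,k} z^k$. Substituting into $F_j$ and expanding in powers of $z$ (recalling that the coefficients of $F_j$ already lie in $A[z]$) produces a polynomial in $z$ whose coefficients are polynomial expressions in the $a_k$ and $c_{i,k}$ with coefficients in $A$. The vanishing of $F_j(f,b_1,\dots,b_m)$ as an element of $A[z]$ is equivalent to the simultaneous vanishing of all these finitely many $z$-coefficients, yielding a finite system of polynomial equations over $A$ that presents $S_\beta$ as Diophantine over $A$ (with the $c_{i,k}$ playing the role of existential variables).

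The argument is essentially the standard coefficient-extraction trick, so no single step constitutes a serious obstacle. The one conceptual point worth emphasising is that the degree-stratified presentation does not collapse to a single Diophantine set over $A$: describing $D_\alpha$ genuinely requires taking a union over all possible degree bounds $\beta \in \N^m$, since there is no a priori bound on $\deg b_i$ in terms of $\alpha$, and this countable union is exactly what the theorem asserts.
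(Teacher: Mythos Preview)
Your proposal is correct and follows essentially the same approach as the paper: stratify the existential witnesses by degree bounds, expand each $F_j$ in powers of $z$ to obtain a finite system of polynomial equations over $A$, and then take the countable union over all degree bounds. The only cosmetic difference is that the paper bounds all witnesses by a single integer $\beta$ rather than a tuple $(\beta_1,\dots,\beta_m)\in\N^m$, but this is immaterial.
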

For the proof of this result and its applications to be discussed below, see Section \ref{SecPoly}.

 Given a set $S\subseteq A[z]$ we define 
$$
\Zero(S)=\{a\in A : f(a)=0\mbox{ for some }f\in S\}\subseteq A.
$$
A first consequence of Theorem \ref{ThmPoly} is the following.
\begin{corollary}\label{CoroZeros} Let $A$ be a ring. Let $D\subseteq A[z]$ be a Diophantine set and let $\alpha\ge 0$. Then $\Zero(D_\alpha)$ and $\Zero(D)$ are the countable union of Diophantine sets over $A$.
\end{corollary}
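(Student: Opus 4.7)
The strategy is to combine Theorem \ref{ThmPoly} with a direct translation of ``$a$ is a zero of $f$'' into an explicit polynomial equation in the coefficients of $f$ and in $a$. By Theorem \ref{ThmPoly}, we may write $D_\alpha = \bigcup_{i\in\N} E_i$ with each $E_i\subseteq A^{\alpha+1}$ Diophantine over $A$. Under the coefficient identification $A[z]_\alpha = A^{\alpha+1}$, so that $f = c_0 + c_1 z + \cdots + c_\alpha z^\alpha$ corresponds to $(c_0,\ldots,c_\alpha)$, the condition $f(a)=0$ becomes the single polynomial equation $c_0 + c_1 a + \cdots + c_\alpha a^\alpha = 0$ in the $\alpha+2$ variables $a, c_0, \ldots, c_\alpha$.

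Therefore
$$\Zero(D_\alpha) \;=\; \bigcup_{i\in\N}\bigl\{a\in A : \exists (c_0,\ldots,c_\alpha)\in E_i \text{ with } c_0 + c_1 a + \cdots + c_\alpha a^\alpha=0\bigr\}.$$
For each $i$, the $i$-th set in this union is obtained by taking a system of polynomial equations (with parameters from $A$) that defines $E_i$, adjoining the single extra equation $\sum_{j} c_j a^j = 0$, and existentially quantifying over $c_0,\ldots,c_\alpha$ together with the auxiliary variables used to define $E_i$. Since Diophantine sets over $A$ are closed under conjunction (i.e.\ under forming finite systems of polynomial equations) and under projection (i.e.\ under introducing additional existential quantifiers), each term of the union is Diophantine over $A$. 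Hence $\Zero(D_\alpha)$ is a countable union of Diophantine sets over $A$.

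For $\Zero(D)$ we use the trivial observation $D = \bigcup_{\alpha\ge 0} D_\alpha$, so that $\Zero(D) = \bigcup_{\alpha\ge 0} \Zero(D_\alpha)$. This is a countable union (over $\alpha\in\N$) of countable unions of Diophantine sets, hence a countable union of Diophantine sets over $A$. There is no real obstacle here once Theorem \ref{ThmPoly} is granted: the argument is a syntactic manipulation of existential formulas, with the substantive work (reducing to a countable union at each bounded degree) being provided by that theorem.
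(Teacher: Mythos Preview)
Your proof is correct and follows essentially the same approach as the paper: apply Theorem \ref{ThmPoly} to write $D_\alpha$ as a countable union of Diophantine sets $E_i\subseteq A^{\alpha+1}$, then for each $E_i$ express the zero condition $f(a)=0$ as the polynomial equation $\sum_j c_j a^j=0$ and project onto the $a$-coordinate. The paper handles $\Zero(D)$ by remarking that the $D_\alpha$ case suffices, which amounts to your observation $\Zero(D)=\bigcup_\alpha \Zero(D_\alpha)$.
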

Let us mention some other consequences of Theorem \ref{ThmPoly}.
\begin{corollary}\label{CoroDeleteZQ} The set $\C-\Z\subseteq \C= \C[z]_0$ is not Diophantine over $\C[z]$, and the set $\R-\Q\subseteq \R= \R[z]_0$ is not Diophantine over $\R[z]$. 
\end{corollary}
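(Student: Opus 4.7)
The plan is to apply Theorem \ref{ThmPoly} with $\alpha=0$ and argue by contradiction. Since $\C[z]_0 = \C$ and $\R[z]_0 = \R$ under the identification by coefficients, if $\C-\Z$ were Diophantine over $\C[z]$ then $\C-\Z \subseteq \C$ would be a countable union of Diophantine subsets of $\C$ over $\C$, and analogously for $\R-\Q$ over $\R$. It therefore suffices to rule out such countable decompositions in each of the two ground rings separately.

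For the complex case I would invoke that a Diophantine subset of $\C$ over $\C$, being the projection to $\C$ of a Zariski closed subset of $\C^{1+m}$, is constructible by Chevalley's theorem. Since the Zariski closed sets in $\C^1$ are precisely the finite sets together with $\C$ itself, every Diophantine subset of $\C$ is either finite or cofinite. A countable union of sets of this kind is either countable (if all summands are finite) or cofinite in $\C$ (as soon as some summand is cofinite). Both alternatives fail for $\C-\Z$, since $\C-\Z$ is uncountable while its complement $\Z$ is infinite, yielding a contradiction.

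For the real case I would instead invoke the Tarski--Seidenberg theorem: the Diophantine subsets of $\R$ over $\R$ are semialgebraic, hence finite unions of points and open intervals. Any semialgebraic subset of $\R-\Q$ can contain no open interval, because every nonempty open interval in $\R$ meets $\Q$, so every such set is finite. Consequently, any countable union of Diophantine subsets of $\R$ contained in $\R-\Q$ is countable, which contradicts the uncountability of $\R-\Q$. I do not anticipate any real obstacle beyond correctly citing the Chevalley and Tarski--Seidenberg statements; once these descriptions of one-variable Diophantine sets are in place, the argument is a direct cardinality comparison after reducing via Theorem \ref{ThmPoly}.
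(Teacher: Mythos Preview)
Your proposal is correct and follows essentially the same route as the paper: reduce via Theorem~\ref{ThmPoly} to a countable union of Diophantine subsets of $\C$ (resp.\ $\R$), then use quantifier elimination (Chevalley over $\C$, Tarski--Seidenberg over $\R$) to describe such sets and derive a cardinality contradiction. The paper phrases the complex case via Tarski's quantifier elimination for algebraically closed fields rather than Chevalley, but the content is identical, and your real case is just a more explicit version of the paper's sentence that a countable union of points and intervals cannot equal $\R-\Q$.
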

This corollary should be contrasted with the fact that $\Z$ is Diophantine in both $\C[z]$ and $\R[z]$, and that $\R-\Z$ is Diophantine in $\R[z]$. See Section \ref{SecDioph}.

Let us recall that a field $L$ is \emph{large} if the following holds for every curve $C$ over $L$: If $C$ has a smooth $L$-rational point, then it has infinitely many. See \cite{PopLarge}. Here is another application of Theorem \ref{ThmPoly}.

\begin{corollary}\label{CoroSubfield}
Let $L$ be a large uncountable  field of characteristic $0$ and let $K\subseteq L$ be a properly contained uncountable sub-field. Then $K$ is not Diophantine in $L[z]$.
\end{corollary}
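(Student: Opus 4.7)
Suppose for contradiction that $K$ is Diophantine over $L[z]$. The plan is to first apply Theorem \ref{ThmPoly} with $A=L$ and $\alpha=0$ to conclude that $K=K_0\subseteq L$ is a countable union $K=\bigcup_{n\ge 1}X_n$ of Diophantine subsets $X_n\subseteq L$ over $L$. Since $K$ is uncountable, at least one $X_n$ must be uncountable; fix such a piece $X$ and write $X=\pi(V(L))$, where $V\subseteq \A^{1+m}_L$ is an affine $L$-variety and $\pi$ is projection onto the first coordinate.

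Next I would use largeness to cut $V$ down to a smooth curve. Decomposing $V$ into irreducible components and running a Noetherian induction on dimension (replacing a component $W$ by its singular locus whenever $W^{\mathrm{sm}}(L)=\emptyset$, since then $W(L)\subseteq W^{\mathrm{sing}}$), one arranges that $V$ is irreducible with $V^{\mathrm{sm}}(L)\neq\emptyset$ and $\pi|_V$ dominant, while keeping $\pi(V(L))$ uncountable. By the defining property of large fields, $V(L)$ is then Zariski dense in $V$. Choose a smooth $L$-point $P\in V^{\mathrm{sm}}(L)$ at which $\pi$ is \'etale and invoke Bertini theorems in characteristic $0$: slicing $V$ by $\dim V-1$ generic $L$-rational hyperplanes through $P$ produces a geometrically irreducible smooth $L$-curve $C\subseteq V$ containing $P$ with $\pi|_C$ non-constant. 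Largeness applied to $C$ again forces $C(L)$ to be Zariski dense in $C$, so $\pi(C(L))$ is an infinite subset of $L$ contained in $K$.

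The decisive step --- and the main obstacle --- is now to show that this configuration $(C,\pi)$ is impossible. The target claim is that $\pi(C(L))$ generates $L$ as a field, whence $\pi(C(L))\subseteq K\subsetneq L$ would yield $K=L$, a contradiction. An elementary polynomial analogue is the following: no non-constant $\phi\in L[x]$ satisfies $\phi(L)\subseteq K$ when $L\neq K$. Indeed, since $K$ is infinite, Vandermonde interpolation at $\deg\phi+1$ distinct points of $K$ forces $\phi\in K[x]$; then the Taylor expansion $\phi(\alpha+s)=\sum_k(\phi^{(k)}(\alpha)/k!)\,s^k$ at any fixed $\alpha\in L\setminus K$, evaluated at all $s\in K$, iteratively forces $\phi^{(k)}(\alpha)\in K$ for every $k$, and isolating the leading term gives $\alpha\in K$, a contradiction. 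To extend this from $\A^1$ to the general smooth curve $C$, I would exploit the formal parameterization $\widehat{\Ocal}_{C,P}\cong L[[t]]$ at $P$, in which $\pi-\pi(P)=c_1 t+c_2 t^2+\cdots$ with $c_1\neq 0$ by \'etaleness, together with the characterization of largeness as the existential closure of $L$ inside $L((t))$, in order to transport polynomial $L$-rational parameterizations of $\A^1$ through $\pi$. The real difficulty is implementing this transport uniformly: unlike $L=\C$ (where $\pi(C(L))$ is automatically cofinite and hence cannot lie in a proper subfield) or $L=\R$ (where semialgebraicity makes $\pi(C(L))$ contain an interval, which generates $\R$ as a field), for an arbitrary large uncountable $L$ of characteristic $0$ one must argue more carefully that $\pi(C(L))$ is rich enough to field-generate all of $L$, using only the large-field axiom via the $L((t))$-formalism.
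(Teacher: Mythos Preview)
Your reduction via Theorem \ref{ThmPoly} to a countable union $K=\bigcup_n X_n$ of Diophantine subsets of $L$, and your observation that some $X_n$ must be infinite (indeed uncountable), is exactly how the paper begins. The divergence is at the next step: you attempt to prove from scratch that an infinite Diophantine subset of a large field $L$ field-generates all of $L$, whereas the paper simply invokes this as a known theorem of Fehm (Theorem~2 in \cite{Fehm}; see also Anscombe \cite{Anscombe}). With that result in hand the proof is one line: the subfield generated by your infinite $X_n$ equals $L$, but it is contained in $K$, so $K=L$.

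Your attempt to reprove Fehm's theorem is where the gap sits, and you acknowledge it yourself: the polynomial case $\phi\in L[x]$ with $\phi(L)\subseteq K$ is handled cleanly by your Vandermonde/Taylor argument, but you state that ``the real difficulty is implementing this transport uniformly'' for a general smooth curve $C$ over an arbitrary large field. That is precisely the content of Fehm's paper, and your sketch (cut to a smooth curve with an $L$-point where $\pi$ is \'etale, use the formal local parameter and the existential-closure-in-$L((t))$ characterization of largeness) is indeed the right shape of argument, but as written it is not a proof. The Bertini reduction also needs a little care over a non-algebraically-closed $L$ (you need the good hyperplanes through $P$ to be parametrized by an $L$-rational open subset, which is fine in characteristic $0$ since $L$ is infinite, but should be stated). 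So either cite Fehm/Anscombe directly, as the paper does, or complete the curve step; the remaining geometry you outline is not the obstacle.
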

This corollary strengthens  some instances of a result by Fehm (cf. Corollary 9 in \cite{Fehm}), namely, that if $L$ is a large field of characteristic $0$ and $K\ne L$ is a sub-field, then $K$ is not Diophantine over $L$. Note that in Corollary \ref{CoroSubfield} the hypothesis that $K$ be uncountable cannot be removed; by work of Denef we know that $\Q$ is Diophantine in $k[z]$ for every field $k$ of characteristic $0$, see Section \ref{SecDioph}.

As special cases of Corollary \ref{CoroSubfield}, we have
\begin{corollary}
Let $k$ be a field of characteristic $0$, let $L=k((t))$, and let $n\ge 2$. Consider the sub-field $K_n=k((t^n))\subseteq L$. Then $K_n$ is not Diophantine over $L[z]$.
\end{corollary}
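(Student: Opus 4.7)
The plan is to deduce this corollary directly from Corollary \ref{CoroSubfield} by verifying its four hypotheses for $L = k((t))$ and $K = K_n = k((t^n))$: (a) $L$ has characteristic zero, (b) $L$ is large, (c) both $L$ and $K_n$ are uncountable, and (d) $K_n$ is a proper subfield of $L$.

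For (a) this is immediate since $k \subseteq L$ and $\cha k = 0$. For (b) one uses the fact that the field of Laurent series $k((t))$ is complete with respect to its $t$-adic valuation and hence henselian; by Pop's theorem that every henselian valued field is large (see \cite{PopLarge}), $L$ is large. For (c) the set of formal power series of the form $\sum_{i \ge 0} a_i t^i$ with $a_i \in \{0,1\}$ is already a subset of $L$ of cardinality $2^{\aleph_0}$, and the analogous set with $t^n$ in place of $t$ shows the same for $K_n$. For (d) I would observe that an element of $K_n$ has the form $\sum_{i \ge -N} b_i t^{ni}$, so its $t$-adic valuation is a multiple of $n$; since $n \ge 2$, the element $t \in L$ has valuation $1$ and therefore cannot belong to $K_n$, proving $K_n \subsetneq L$.

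Having checked all the hypotheses, Corollary \ref{CoroSubfield} then yields at once that $K_n$ is not Diophantine in $L[z]$. There is no real obstacle here: the only substantive input is the largeness of $k((t))$, for which the cited theorem of Pop does all the work, while the remaining verifications are entirely elementary.
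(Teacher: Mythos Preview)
Your proposal is correct and follows exactly the paper's approach: the paper simply presents this corollary as a special case of Corollary~\ref{CoroSubfield} without further proof, and your verification of the four hypotheses (characteristic zero, largeness of $k((t))$ via henselianity, uncountability via binary power series, and proper containment via the $t$-adic valuation) is precisely what is needed to justify that application.
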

Note that if $k$ is an algebraically closed field of characteristic $p$, then by taking $p$-th powers we see that $k((t^p))$ is Diophantine in $k((t))$, hence, in $k((t))[z]$. 
\begin{corollary}
Let $k$ be an uncountable field of characteristic $0$ and let $L=k((t))$. The sub-field $k\subseteq L$ is not Diophantine over $L[z]$.
\end{corollary}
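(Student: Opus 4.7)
The plan is to deduce this statement as a direct application of Corollary \ref{CoroSubfield}. To invoke that corollary with the pair $(K,L)=(k,k((t)))$, I need to verify four things: that $L$ has characteristic $0$, that $L$ is uncountable, that $L$ is a large field, and that $k$ is a properly contained uncountable subfield of $L$.

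First, three of these conditions are essentially immediate. Characteristic $0$ passes from $k$ to $L=k((t))$. Uncountability of $L$ follows from the inclusion $k\subseteq L$ together with the hypothesis that $k$ is uncountable. The inclusion $k\subseteq L$ is proper since $t\in L\setminus k$, and $k$ is uncountable by assumption. Thus the only nontrivial point to check is that $L=k((t))$ is a large field in the sense of Pop.

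For largeness, I would appeal to the standard fact that every Henselian valued field is large. The field $k((t))$ carries the $t$-adic valuation and is complete with respect to it, so it is Henselian; hence it is large. Once these verifications are in place, Corollary \ref{CoroSubfield} applies directly and yields that $k$ is not Diophantine over $L[z]$, which is the desired conclusion. I do not foresee any real obstacle here: the content of the statement lies entirely in Corollary \ref{CoroSubfield}, and the present corollary is simply a specialization obtained by exhibiting a concrete uncountable large field of characteristic zero together with a proper uncountable subfield.
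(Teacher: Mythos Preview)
Your proposal is correct and matches the paper's approach exactly: the paper presents this corollary without proof, simply listing it as a special case of Corollary \ref{CoroSubfield}, and your verification of the hypotheses (in particular the largeness of $k((t))$ via its being Henselian) is the intended justification.
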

The condition that $k$ be uncountable cannot be removed: $\Q$ is Diophantine in $L[z]$ for every field $L$ of characteristic $0$ (see Section \ref{SecDioph}), in particular for $L=\Q((t))$. Also, it is worth pointing out that if $k$ is a large field then $k$ is Diophantine in $k(t)[z]$, see Section \ref{SecDioph}.

Finally, let us discuss an application of Theorem \ref{ThmPoly} of a different kind. For a subset $S\subseteq \R[z]$ we define its real closure $\overline{S}\subseteq \R[z]$ as follows: For each $\alpha$ we let $\overline{S_\alpha}\subseteq \R[z]_\alpha= \R^{\alpha+1}$ be the real closure of $S_\alpha$ in $\R^{\alpha+1}$, and let $\overline{S}=\cup_\alpha \overline{S_\alpha}$. Given $D\subseteq \R[z]$ Diophantine one can ask whether its real closure is also Diophantine. We show that in general this is not the case.

\begin{corollary}\label{CoroClosure} There is a set $D\subseteq \R[z]_1$ which is Diophantine over $\R[z]$ but its real closure $\overline{D}\subseteq \R[z]_1$ is not.
\end{corollary}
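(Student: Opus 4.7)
The plan is to exhibit a Diophantine set $D \subseteq \R[z]_1 = \R^2$ whose real closure contains the transcendental curve $E = \{(x, e^x) : x \in \R\}$. Theorem \ref{ThmPoly} forces any Diophantine subset of $\R[z]_1$ over $\R[z]$ to be a countable union of Diophantine subsets of $\R^2$, and Diophantine subsets of $\R^n$ are semi-algebraic (by Tarski--Seidenberg together with the identities $x > 0 \iff \exists y\, xy^2 = 1$ and $x \ge 0 \iff \exists y\, y^2 = x$), so it suffices to arrange that $\overline{D}$ is not a countable union of semi-algebraic subsets of $\R^2$. Identifying $a + bz \in \R[z]_1$ with $(a, b) \in \R^2$, I would take
\[
D = \{(a, b) \in \R^2 : a \in \Q \text{ and } b = (1 + a/n)^n \text{ for some } n \in \Z_{>0}\},
\]
suggested by the classical limit $(1 + x/n)^n \to e^x$.

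That $D$ is Diophantine over $\R[z]$ would be obtained by combining the Diophantineness of $\Z$ and $\Q$ in $\R[z]$ (Section \ref{SecDioph}) with Matijasevich's theorem that the graph of integer exponentiation $\{(p, n, P) \in \Z^3 : P = p^n\}$ is Diophantine over $\Z$. Writing $1 + a/n = p/q$ with $p, q \in \Z$, $q > 0$, the defining condition on $(a, b)$ becomes the existence of $n, p, q, P, Q \in \Z$ with $n, q > 0$, $qa = n(p - q)$, $P = p^n$, $Q = q^n$, and $bQ = P$, which is positive-existential in $\R[z]$. To verify $\overline{D} \supseteq E$, given $x_0 \in \R$ pick rationals $a_k \to x_0$ and positive integers $n_k \to \infty$; then $(a_k, (1 + a_k/n_k)^{n_k}) \in D$ converges to $(x_0, e^{x_0})$ by uniform convergence of $(1 + x/n)^n$ to $e^x$ on compact subsets of $\R$.

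The key remaining step is to show that $\overline{D}$ is not a countable union of semi-algebraic subsets of $\R^2$. Splitting any sequence in $D$ converging to a point of $\overline{D}$ according to whether the parameter $n_k$ stays bounded or tends to infinity, one finds
\[
\overline{D} = \bigcup_{n \ge 1} \{(x, y) : y = (1 + x/n)^n\} \cup E,
\]
a one-dimensional subset of $\R^2$ with empty interior. If $\overline{D} = \bigcup_k T_k$ with each $T_k$ semi-algebraic, then each $T_k$ has empty interior in $\R^2$, so $\dim T_k \le 1$ and the Zariski closure $W_k$ of $T_k$ in $\R^2$ is at most one-dimensional. For any nonzero $p \in \R[x, y]$, the function $p(x, e^x)$ is a nonzero exponential polynomial by transcendence of $e^x$, and its asymptotic behavior at $\pm\infty$ is dominated by a single monomial $q(x) e^{kx}$ with $q \not\equiv 0$; hence its real zero set is bounded, and then finite by analyticity. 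Thus $W_k \cap E$, and a fortiori $T_k \cap E$, is finite for each $k$, so $E = \bigcup_k (T_k \cap E)$ would be countable, contradicting $|E| = |\R|$.

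The main obstacle is the Diophantine encoding in the first step: the relation $b = (1 + a/n)^n$ with $n$ ranging over $\Z_{>0}$ involves a variable integer exponent, which is captured positive-existentially only by transporting Matijasevich's theorem to $\R[z]$ via the Diophantineness of $\Z$. The auxiliary finiteness of the real zero set of a nonzero exponential polynomial is what converts the transcendence of $e^x$ into the non-semi-algebraicity of $\overline{D}$.
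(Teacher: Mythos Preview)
Your proof is correct and follows the same overall architecture as the paper's: build a Diophantine $D\subseteq\R[z]_1$ whose closure contains the graph of a transcendental analytic function, then invoke Theorem~\ref{ThmPoly} together with Tarski--Seidenberg to see that $\overline{D}$ cannot be a countable union of semi-algebraic sets. The difference lies in the specific construction. The paper first manufactures, in Lemma~\ref{LemmaAnalytic}, an ad hoc transcendental analytic $f:\R\to\R$ with $f(\Q)\subseteq\Q$ and $f|_\Q$ computable, takes $D=\{(q,f(q)):q\in\Q\}$, and appeals to the DPRM property of $\Q[z]$ (via Lemmas~\ref{LemmaQzinRz} and~\ref{LemmaDPRMinQz}) for Diophantineness; then $\overline{D}$ is exactly the graph $\Gamma_f$. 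You instead use the classical approximation $(1+x/n)^n\to e^x$, obtain Diophantineness from Matiyasevich's theorem over $\Z$ transported through the Diophantineness of $\Z$ in $\R[z]$, and argue that $\overline{D}$ is the union of the algebraic curves $C_n$ together with the graph of $e^x$. Your route avoids the bespoke construction of Lemma~\ref{LemmaAnalytic} and uses only the most familiar transcendental function; the price is that $\overline{D}$ is no longer a single curve, so you need the extra (short) step of checking that $\overline{D}$ has empty interior and intersecting the semi-algebraic pieces with the exponential graph. Both arguments ultimately rest on the same finiteness principle: a nonzero polynomial in $(x,e^x)$, being a nontrivial exponential polynomial, has only finitely many real zeros.
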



\subsection{Rational functions} Let $L$ be an uncountable large field of characteristic $0$.

Regarding rational functions, we recall that Koll\'ar \cite{Kollar} proved that $L[z]$ is not Diophantine in $L(z)$, and that for any $g\in L(z)$ with degree at least $2$ (as a map $g:\Pro^1\to \Pro^1$) the sub-field $L(g)\subseteq L(z)$ is not Diophantine in $L(z)$. Koll\'ar actually proved a general result about Diophantine subsets of $L(z)$ from which these two applications are derived. Using the results of Koll\'ar we prove that other natural sets are not Diophantine over $L(z)$.

For a field $k$ and $g\in k(z)$ let $\deg(g)$ be the degree of the map $g:\Pro^1\to \Pro^1$ induced by $g$; thus, if $g\in k[z]$ and $g\ne 0$ then $\deg(g)$ as defined here agrees with the usual definition of the degree of a polynomial (note that with this definition $\deg(0)=0$ as for any other constant function). 

In connection with Problem \ref{ProblemDeg} we prove
\begin{theorem}\label{ThmDeg} The set
$$
\Deg=\{(f,g)\in L(z)^2 : \deg(f)=\deg(g)\}
$$
is not Diophantine over $L(z)$.
\end{theorem}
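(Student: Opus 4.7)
The approach is a proof by contradiction: assume $\Deg$ is Diophantine over $L(z)$ and derive that a set already known to be non-Diophantine, by one of the results of Koll\'ar in \cite{Kollar}, becomes Diophantine.

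The first step is slicing. Fixing $g = z^n$ in the hypothetical Diophantine definition of $\Deg$ yields that, for every integer $n \ge 0$, the degree-$n$ slice
\[
E_n := \{f \in L(z) : \deg f = n\}
\]
is Diophantine over $L(z)$. Since $L(z)$ is a field, finite unions of Diophantine sets are Diophantine (via $F_1 F_2 = 0 \iff F_1=0 \vee F_2 = 0$), so the bounded-degree set $\{f \in L(z) : \deg f \le n\} = \bigcup_{k=0}^{n} E_k$ is also Diophantine for every $n \ge 0$. The case $n=0$ gives in particular that the constant subfield $L = E_0$ is Diophantine in $L(z)$.

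The second step is to use these Diophantine sets, together with the Diophantine predicate $\Deg$ itself (which allows Diophantine comparisons $\deg f = \deg g$), to construct a Diophantine definition of the polynomial ring $L[z]$ in $L(z)$, contradicting Koll\'ar's theorem. The characterization driving the construction is
\[
f \in L[z] \iff \exists\, n \ge \deg f,\ \exists\, a_0, \ldots, a_n \in L \text{ with } f = \sum_{i=0}^{n} a_i z^i.
\]
The degree bound $n$ is encoded by the auxiliary witness $z^n \in L(z)$, whose relation to $f$ is Diophantinely expressible through the sets $\{f : \deg f \le n\}$; the constraint $a_i \in L$ is Diophantine since $L$ is. If such a Diophantine encoding can be written down, the resulting Diophantine definition of $L[z]$ contradicts the first of Koll\'ar's results cited in the paper, and the theorem follows.

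\textbf{Main obstacle.} The crucial and delicate point is making the encoding above rigorous with only finitely many existential variables. Naively, the tuple $(a_0, \ldots, a_n)$ has length $n+1$ depending on $\deg f$, while a Diophantine formula is fixed in its number of existentially quantified variables. The resolution must package the full coefficient tuple into a single element of $L(z)$ (e.g.\ as a ``coefficient polynomial'' whose membership in $L[z]$ is pinned down by $\Deg$ plus $L$-valuedness) and use the degree comparison predicates supplied by $\Deg$ to recover and verify the individual coefficients. Alternatively, if this encoding resists straightforward implementation, the same strategy can be run with $L(g)$ (for $\deg g \ge 2$, e.g.\ $g = z^2$) in place of $L[z]$, invoking instead the second of Koll\'ar's results. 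Either way, the heart of the proof is using the degree-comparison power supplied by $\Deg$ to encode a set of unbounded ``complexity'' ($L[z]$ or $L(g)$) via a single existential formula; once this is achieved, Koll\'ar's theorem supplies the contradiction.
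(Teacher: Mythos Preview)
Your plan has a genuine gap, and you have essentially identified it yourself without resolving it. From the assumption that $\Deg$ is Diophantine you correctly extract that each slice $E_n=\{f:\deg f=n\}$ is Diophantine, and hence so is each $L[z]_n$ (using finitely many coefficient variables $a_0,\dots,a_n\in L$). But $L[z]=\bigcup_n L[z]_n$ is a \emph{countable} union, and you give no mechanism to collapse it to a single existential formula. Your suggested fix, ``package the coefficient tuple into a single element of $L(z)$,'' is circular: verifying that the package is itself a polynomial is exactly the predicate you are trying to define. Switching the target to $L(z^2)$ does not help either; its elements also have unbounded degree, the characterization $f(z)=f(-z)$ invokes an automorphism not available in the ring language, and ``$\exists g,\ f=g(z^2)$'' invokes composition, which is likewise unavailable. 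In short, nothing in your toolkit (the binary relation $\Deg$, the constants $L$, the individual $E_n$) lets you quantify over an unbounded-length coefficient list, and without that the reduction to Koll\'ar's non-Diophantineness of $L[z]$ or $L(g)$ never gets off the ground.

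The paper's argument sidesteps this obstacle entirely by aiming at a different consequence of Koll\'ar's theorem. Rather than trying to define $L[z]$, it first constructs (via an elliptic curve over $\Q(z)$ with infinite Mordell--Weil group) a Diophantine set $D\subseteq L(z)$ whose degree set $\{\deg g:g\in D\}\subseteq\N$ grows quadratically and hence contains no infinite arithmetic progression. Assuming $\Deg$ is Diophantine, the set
\[
D'=\{f\in L(z):\exists g\in D,\ \deg f=\deg g\}
\]
is then Diophantine, has $\ddim(D')=\infty$ (it contains \emph{all} rational functions of each degree occurring in $D$), yet $\{\deg f:f\in D'\}$ contains no infinite arithmetic progression. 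This directly contradicts the corollary of Koll\'ar's theorem that any Diophantine set of infinite Diophantine dimension must have a degree set containing an infinite arithmetic progression. The elliptic-curve input is the crucial idea your proposal is missing.
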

This should be compared with the following fact. There is another way to extend the degree from $L[z]$ to $L(z)$, namely, by defining $\deg^*(g)=-v_\infty(g)$ where $v_\infty$ is the valuation at the point at infinity of $\Pro^1$. Then, it follows from results of Denef that 
$$
\{(f,g)\in \R(z)^2 : \deg^*(f)=\deg^*(g)\}
$$
is Diophantine over $\R(z)$; the analogous result over $\C$ is not known. See Section \ref{SecDioph}.

The degree $\deg(g)$ equals the number of poles of $g$ in $\Pro^1$ over $L^{\alg}$, counting multiplicity. One can ask about the analogous concept without counting multiplicity, a notion that naturally appears in function field arithmetic in the form of the truncated counting function \cite{Vojta}. For $n\ge 1$ let us define 
$$
\Pcal_n=\{f\in L(z) : f \mbox{ has at most }n\mbox{ poles in }\Pro^1\mbox{ over }L^{\alg}, \mbox{ without counting multiplicity}\}.
$$
\begin{theorem}\label{ThmN1} For each $n\ge 1$ the set $\Pcal_n$ is not Diophantine over $L(z)$.
\end{theorem}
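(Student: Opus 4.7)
The plan is to prove Theorem~\ref{ThmN1} by induction on $n$, bottoming out in Koll\'ar's theorem that $L[z]$ is not Diophantine in $L(z)$.

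For the base case $n=1$, I would show that if $\Pcal_1$ were Diophantine then so would be $L[z]$, via the identity
$$
L[z] \;=\; \{f \in L(z) : f \in \Pcal_1 \text{ and } f + z \in \Pcal_1\}.
$$
The inclusion $\subseteq$ is immediate; conversely any $f \in \Pcal_1 \setminus L[z]$ has its unique pole at some $L$-rational finite point (a singleton pole set in $\Pro^1(L^{\alg})$ is Galois-fixed, hence $L$-rational, and if it were infinity then $f$ would be a polynomial), so $f+z$ acquires a new pole at infinity and falls out of $\Pcal_1$.

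For the inductive step with $n \geq 2$, I assume $\Pcal_{n-1}$ is not Diophantine and that $\Pcal_n$ is Diophantine (for contradiction), aiming to conclude that $\Pcal_{n-1}$ is Diophantine. First, I characterize the constant field as
$$
L \;=\; \bigl\{f \in \Pcal_n : f + \textstyle\sum_{i \in S_j} 1/(z-i) \in \Pcal_n \text{ for every } j = 1, \ldots, n+1\bigr\},
$$
where $S_1, \ldots, S_{n+1}$ are pairwise disjoint $n$-element subsets of $\{1, 2, 3, \dots\} \subseteq L$ (available since $\mathrm{char}\,L=0$). The nontrivial inclusion is proved by a counting argument: if $f \in \Pcal_n$ has pole set $P \subseteq \Pro^1(L^{\alg})$ of size $k$, then the $j$-th condition forces $2\,|P \cap S_j| \geq k$ (the only cancellations in $S_j$ occur at simple poles of $f$ in $S_j$ with residue $-1$, and their number is bounded by $|P \cap S_j|$); summing over $j$ and using disjointness of the $S_j$ gives $k \geq (n+1)k/2$, which for $n \geq 2$ forces $k=0$, i.e.\ $f \in L$. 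Second, with $L$ now Diophantine I characterize
$$
\Pcal_{n-1} \;=\; \bigl\{f \in \Pcal_n : \exists\, a_1, \dots, a_{n+1} \in L \text{ pairwise distinct with } f + 1/(z-a_i) \in \Pcal_n \text{ for all } i\bigr\}.
$$
The forward direction is easy: for any $f \in \Pcal_{n-1}$, any $n+1$ distinct $a_i \in L$ make $f + 1/(z-a_i)$ have at most $n$ distinct poles. The reverse direction uses pigeonhole: if $f \in \Pcal_n$ has exactly $n$ distinct poles, then any $a_i \in L$ outside the pole set produces an $(n+1)$-st pole, so the condition $f + 1/(z-a_i) \in \Pcal_n$ forces every $a_i$ to lie in the pole set; but there are at most $n$ $L$-rational poles and $n+1$ distinct $a_i$, a contradiction. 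Hence $\Pcal_{n-1}$ would be Diophantine, contradicting the inductive hypothesis.

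The main obstacle is the careful bookkeeping of pole cancellations: when $a$ is a simple pole of $f$ with residue exactly $-1$, adding $1/(z-a)$ removes the pole at $a$ rather than reinforcing it. For the characterization of $L$ this is absorbed by the weak bound $|C_j| \leq |P \cap S_j|$ on the number of cancellations; for the characterization of $\Pcal_{n-1}$ the issue is moot because cancellations can only lower the pole count. A related subtlety is that poles of $f$ at infinity or at non-$L$-rational points of $L^{\alg}$ lie in none of the $S_j$ and cannot cancel, so they always contribute fully to $k$, only strengthening the double-counting bound.
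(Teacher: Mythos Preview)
Your argument is correct, but it follows a genuinely different route from the paper's. The paper applies Koll\'ar's structural theorem (Theorem~\ref{ThmKollar}) directly: since $L[z]\subseteq\Pcal_n$ one has $\ddim(\Pcal_n)=\infty$, so Koll\'ar produces $P_a$ and $r$ with $P_a+r\cdot S^{n+1}\Pro^1\subseteq\overline{\Pole_{a+r(n+1)}(\Pcal_n)}$; but the latter set is contained in the closed locus of divisors supported on at most $n$ points, while the former visibly contains divisors with $n+1$ points in their support, a contradiction.

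By contrast, you only invoke the \emph{consequence} of Koll\'ar that $L[z]$ is not Diophantine, and build an explicit chain of reductions $\Pcal_n\Rightarrow\Pcal_{n-1}\Rightarrow\cdots\Rightarrow\Pcal_1\Rightarrow L[z]$ via elementary pole-counting. Your argument is thus more self-contained (modulo the single black box) and has the pleasant feature of exhibiting concrete Diophantine reductions between the $\Pcal_n$. The paper's argument, on the other hand, is shorter and uniform in $n$, and illustrates how the full strength of Theorem~\ref{ThmKollar} handles such questions in one stroke. Two minor remarks: your derivation that $L$ is Diophantine in $L(z)$ is already available as Lemma~\ref{LemmaCtRat} (large fields), so that part of the inductive step is redundant; and in your characterization of $\Pcal_{n-1}$ you could in fact replace the existential quantifier over $a_1,\dots,a_{n+1}\in L$ by the fixed witnesses $1,2,\dots,n+1$, which would make the reduction entirely parameter-free and sidestep the need for $L$ to be Diophantine altogether.
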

Consider the valuation ring at $\infty\in \Pro^1$
$$
\Ocal_{\infty}(L) = \{f\in L(z) : v_\infty(f)\ge 0\}.
$$
We will be concerned with the question of whether $\Ocal_{\infty}(L)$ is Diophantine in $L(z)$ or not. For technical reasons it is more convenient to work instead with the set 
$$
\Vcal(L)=\{f\in L(z) : v_\infty(f)\le 0\}
$$
but, of course, $\Ocal_{\infty}(L)$ is Diophantine in $L(z)$ if and only if $\Vcal(L)$ is (by taking reciprocals).

 By results of Denef (see Section \ref{SecDioph}) the set $\Ocal_{\infty}(\R)$ is Diophantine in $\R(z)$, but the following remains as  an important open problem:
\begin{problem}\label{ProblemVal} Is $\Ocal_{\infty}(\C)$ Diophantine in $\C(z)$? Equivalently, is $\Vcal(\C)$ Diophantine in $\C(z)$?
\end{problem}
By work of Denef \cite{Denef}, a positive answer would yield a negative solution to the analogue of Hilbert's tenth problem over $\C(z)$, which is one of the main open problems in the area (cf. p.56 and Section 2 in \cite{Survey}). We prove a result approaching a negative answer to Problem \ref{ProblemVal}.

 For $0\le \epsilon \le 1$ let us define $\Vcal_\epsilon(L)\subseteq L(z)^\times$ as the set of all rational functions $f\in L(z)^\times$ that can be written as $f=p/q$ with $p,q\in L[z]$ non-zero coprime polynomials satisfying
$$
\deg(q)\le (1-\epsilon) \deg(p).
$$
Note that $\Vcal_1(L)=L[z]-\{0\}$ and $\Vcal_0(L)=\Vcal(L)$. Furthermore, for each $0\le \eta \le  \epsilon \le 1$ we have $\Vcal_\eta(L)\supseteq \Vcal_\epsilon(L)$, and the (nested) union of the sets $\Vcal_\epsilon(L)$ for $0< \epsilon\le 1$ is
$$
\bigcup_{0< \epsilon \le 1} \Vcal_\epsilon(L) = \{f\in L(z) : v_\infty(f)< 0\} = z\cdot \Vcal(L).
$$
Thus, $\Vcal_0(L)=\Vcal(L)$ and in the limit $\epsilon\to 0$ the sets $\Vcal_\epsilon(L)$ converge to $z\cdot \Vcal (L)$. We prove:
\begin{theorem}\label{ThmVal} For each $0< \epsilon\le 1$ the set $\Vcal_\epsilon(L)$ is not Diophantine in $L(z)$.
\end{theorem}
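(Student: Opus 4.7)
I would argue by contradiction, reducing to Koll\'ar's theorem that $L[z]$ is not Diophantine in $L(z)$ \cite{Kollar}. For $\epsilon=1$, note $\Vcal_1(L)=L[z]\setminus\{0\}$, so a Diophantine definition of $\Vcal_1(L)$ together with the trivially-Diophantine singleton $\{0\}$ would yield a Diophantine definition of $L[z]=\Vcal_1(L)\cup\{0\}$, contradicting Koll\'ar.

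For $0<\epsilon<1$, set $M=\lceil 1/(1-\epsilon)\rceil\ge 2$ and consider the algebraic family
$$
f_s(z)=\frac{1+s\,z^M}{z}\in L(z),\qquad s\in L.
$$
For each $s\ne 0$, the numerator $1+sz^M$ has degree $M$ and constant term $1\ne 0$, so it is coprime with $z$ and $f_s$ is in lowest terms with numerator degree $M$ and denominator degree $1$; since $(1-\epsilon)M\ge 1$, this gives $f_s\in \Vcal_\epsilon(L)$. On the other hand, $f_0=1/z$ is in lowest terms with numerator degree $0$ and denominator degree $1$, so $f_0\notin \Vcal_\epsilon(L)$.

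Assume for contradiction that $\Vcal_\epsilon(L)$ is Diophantine, defined by a system $\exists\,\bar y\in L(z)^m\colon F_i(f,\bar y)=0$. For each $s\in L\setminus\{0\}$---an uncountable subset of $L$---there exist witnesses $\bar y_s\in L(z)^m$. Using the largeness and uncountability of $L$, Koll\'ar's spreading-out and specialization technique from \cite{Kollar}---the same tool used there to show that $L[z]$ is not Diophantine in $L(z)$---produces a generic family of witnesses $\bar y(s)\in L(s)(z)^m$ for the family $\{f_s\}$; specializing at $s=0$ then provides a witness certifying $f_0\in \Vcal_\epsilon(L)$, contradicting the explicit computation above. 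The main obstacle is precisely this specialization step: one must ensure the generic family of witnesses extends across $s=0$ without degeneration, which is the heart of Koll\'ar's method and crucially uses both the largeness of $L$ (to guarantee that generic witnesses descend to $L$-rational ones) and its uncountability (to rule out pathological exceptional sets of $s$-values that would obstruct specialization at $0$).
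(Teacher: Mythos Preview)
Your treatment of $\epsilon=1$ is fine, but the argument for $0<\epsilon<1$ has a genuine gap at the specialization step, and the gap cannot be closed along the lines you suggest.

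The principle you are invoking---``if $D\subseteq L(z)$ is Diophantine and $f_s\in D$ for all $s\in L\setminus\{0\}$, then $f_0\in D$''---is simply false. Take $D=L(z)\setminus\{1/z\}$, which is Diophantine over the field $L(z)$ via $\exists y\,(f-1/z)y=1$. Your own family $f_s=(1+sz^M)/z$ has $f_s\in D$ for every $s\ne 0$ yet $f_0=1/z\notin D$. So no amount of largeness or uncountability will force the witnesses to specialize across $s=0$; Diophantine sets over $L(z)$ are not closed under one-parameter degeneration in this naive sense. More structurally: for each degree bound $\beta$ on the witnesses, the set of $s\in L$ admitting a witness of degree $\le\beta$ is constructible over $L$, and $L\setminus\{0\}$ is already a countable union of such sets (indeed a single one), with no tension whatsoever at $s=0$.

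This is also a mischaracterization of Koll\'ar's method. His theorem (Theorem~\ref{ThmKollar} in the paper) does not produce witnesses at a prescribed limit point; rather, it constrains the Zariski closure of the pole divisors $\Pole_\ell(D)$ inside the symmetric powers $S^\ell\Pro^1$ whenever $\ddim(D)=\infty$. The paper's proof of Theorem~\ref{ThmVal} proceeds accordingly: one checks that every $f\in\Vcal_\epsilon(L)$ of degree $\ell$ has $\mult_\infty(\Pole(f))\ge\epsilon\ell$, so $\Pole_\ell(\Vcal_\epsilon(L))$ lies in the closed locus $Z_{\epsilon,\ell}=\{E\ge 0:\deg E=\ell,\ \mult_\infty E\ge\epsilon\ell\}$. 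Since $L[z]\setminus\{0\}\subseteq\Vcal_\epsilon(L)$ gives $\ddim=\infty$, Koll\'ar's theorem yields $P_a$ and $r$ with $P_a+r\cdot S^m\Pro^1\subseteq Z_{\epsilon,a+rm}$ for all $m$; choosing $E_m=P_a+rmQ$ with $Q\ne\infty$ keeps $\mult_\infty(E_m)\le a$ while membership in $Z_{\epsilon,a+rm}$ forces $\mult_\infty(E_m)\ge\epsilon(a+rm)\to\infty$, a contradiction. The argument lives entirely on the pole-divisor side, not on individual witnesses, and that is what makes it go through.
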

So, although we cannot treat the limit case $\epsilon =0$ which corresponds to the valuation ring $\Ocal_\infty(L)$, we can approximate it as much as we want by letting $\epsilon\to 0$, obtaining that the corresponding sets $\Vcal_\epsilon(L)$ are not Diophantine.

As it will be clear from the proofs, this last discussion on the valuation at the point $\infty\in \Pro^1$ can be repeated at any other point of $\Pro^1$ with the appropriate changes in the notation, but we decided to focus just in this case to simplify the discussion.


\subsection{A conjectural picture for complex rational functions} In this section we focus on complex rational functions. Let us recall the following conjecture proposed by Koll\'ar \cite{Kollar}.

\begin{conjecture}[Koll\'ar]\label{ConjKollar}
Let $D\subseteq \C(z)$ be a Diophantine set such that for infinitely many integers $n\ge 0$ we have that $D$ contains a Zariski open subset of $\C[z]_n=\C^{n+1}$. Then $\C(z)-D$ is finite.
\end{conjecture}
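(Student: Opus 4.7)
The plan is to combine Koll\'ar's geometric realization of Diophantine sets in $\C(z)$ with a deformation argument that leverages the infinitely many polynomial strata on which $D$ is assumed Zariski dense.

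First I would replace the abstract Diophantine definition of $D$ by its geometric avatar: a flat projective family $\pi : \Xcal \to \Pro^1_{\C}$ together with a morphism $\chi : \Xcal \to \Pro^1$, so that, up to a finite correction, $D$ equals the set of $\chi \circ \sigma$ where $\sigma$ ranges over sections of $\pi$. The hypothesis that $D$ contains a Zariski open $U_n \subseteq \C[z]_n = \C^{n+1}$ for infinitely many $n$ then translates into the existence of irreducible components $M_n$ of the scheme of sections of $\pi$ such that the map $M_n \to \C[z]_n$ given by $\chi$-composition is dominant; in particular $\dim M_n \geq n+1$, and the anti-canonical degree of the sections parameterized by $M_n$ grows at least linearly in $n$.

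Next I would argue that this infinite sequence of large families of sections forces $\pi$ to be birational, over $\Pro^1$, to the second projection $\Pro^1 \times \Pro^1 \to \Pro^1$, with $\chi$ corresponding to projection onto the first factor. The technique I have in mind is bend-and-break applied to the universal section, combined with the fact that a general point of $\Xcal$ lies on sections belonging to $M_n$ for arbitrarily large $n$; this should force rationally connected fibers and, via Graber--Harris--Starr type constraints, produce a trivializing section. Once such a birational product structure is in hand, every rational function $f \in \C(z)$ lifts through the trivialization to a section $\sigma$ with $\chi \circ \sigma = f$, except for those $f$ lying in the image of the exceptional locus of the birational map; this exceptional locus is a proper closed subset of $\Pro^1$, i.e.\ a finite set, yielding the conclusion that $\C(z) \setminus D$ is finite.

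The main obstacle is the second step: extracting birational triviality of $\pi$ from the Zariski density of $D$ in polynomial strata of infinitely many degrees. Koll\'ar's existing framework handles the case of a single continuous family of sections, but here the hypothesis is of a more delicate combinatorial flavor, since the components $M_n$ for different $n$ are a priori unrelated. Gluing them into a single geometric structure would require additional input relating sections of different $\chi$-degree, perhaps of monodromy-theoretic origin or extracted from the equations defining $D$; alternatively, one might try to prove that the mere dimension growth $\dim M_n \to \infty$ is already incompatible with an infinite complement $\C(z) \setminus D$. Either direction appears to demand genuinely new techniques beyond the currently available toolkit, which is unsurprising given that the statement is a longstanding conjecture of Koll\'ar.
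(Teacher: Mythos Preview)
The statement you are addressing is Conjecture~\ref{ConjKollar}, which the paper explicitly records as an \emph{open} conjecture of Koll\'ar and does not attempt to prove; the paper only uses it as a hypothesis in Propositions~\ref{PropConj} and~\ref{PropCampana}. There is therefore no proof in the paper to compare your proposal against.

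As for the proposal itself, it is not a proof but a programmatic outline, and you say as much in your final paragraph. The first step---replacing the Diophantine definition by a family $\pi:\Xcal\to\Pro^1$ with an evaluation map $\chi$ and translating the hypothesis into the existence of large components $M_n$ of the space of sections---is a reasonable restatement in Koll\'ar's geometric language. The decisive second step, however, is left as a hope: you would need to show that the existence of section components $M_n$ of dimension $\ge n+1$ for infinitely many $n$ forces $\pi$ to be birationally a product with $\chi$ the first projection, and you correctly note that bend-and-break and Graber--Harris--Starr, as currently understood, do not deliver this. Your own assessment that closing this gap ``appears to demand genuinely new techniques'' is accurate and is exactly why the statement remains a conjecture.
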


So far, this conjecture has remained unexplored. Here we would like to record several simple consequences that suggest that Koll\'ar's conjecture, if correct, imposes strong restrictions on the Diophantine sets of $\C(z)$ and supports some non-analogies with $\Q$. See Section \ref{SecKollarConjProofs}.

\begin{proposition}\label{PropConj}
If Conjecture \ref{ConjKollar} is true, then the following subsets of $\C(z)$ are not Diophantine:
\begin{itemize}
\item[(i)] Non-constants: $T=\C(z)-\C$;
\item[(ii)] Derivatives: $\Der = \{f' : f\in \C(z)\}$ where $f'$ is the derivative of $f$;
\item[(iii)] Valuations: $\Ocal_\infty(\C) = \{f\in \C(z) : v_\infty(f)\ge 0\}$ and more generally, for any $x\in \Pro^1$ the set $\Ocal_x(\C)= \{f\in \C(z) : v_x(f)\ge 0\}$ where $v_x$ is the valuation at $x$;
\item[(iv)] Non-squares: $\Ncal=\C(z)-\{f^2 : f\in \C(z)\}$;
\item[(v)] Non-polynomials: $\C(z)-\C[z]$.
\end{itemize}
\end{proposition}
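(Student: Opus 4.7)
The overall strategy is, for each of the five sets $D$ listed, to assume $D$ is Diophantine and derive a contradiction from Koll\'ar's Conjecture~\ref{ConjKollar}, applied either to $D$ itself or to a Diophantine transform of $D$ under a $\C$-automorphism of $\C(z)$. Specifically, the plan is to verify that (a) $D$ contains a Zariski open subset of $\C[z]_n=\C^{n+1}$ for infinitely many $n$, and (b) $\C(z)\setminus D$ is infinite; Koll\'ar's conjecture then forces $\C(z)\setminus D$ to be finite, a contradiction. I note that any $\C$-automorphism $\phi$ of $\C(z)$ preserves Diophantineness, because one obtains a Diophantine definition of $\phi(D)$ by applying $\phi$ to the coefficients of the defining polynomials for $D$ (and relabelling the existential variables by $\phi$).

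Four of the cases admit a direct verification of (a) and (b) for $D$ itself. For (i), $T\cap\C[z]_n=\C[z]_n\setminus\C$ is Zariski open in $\C^{n+1}$ for every $n\ge 1$, and the complement $\C$ is infinite. For (ii), termwise integration shows $\C[z]_n\subseteq\Der$, while each $1/(z-a)$ with $a\in\C$ lies in the complement: inspection of Laurent expansions shows that the derivative of any rational function has zero residue at every point of $\Pro^1$, whereas $1/(z-a)$ has residue $1$ at $a$, so the complement of $\Der$ is infinite. For (iv), a polynomial of odd degree cannot be a square in $\C(z)$ (a rational square that is a polynomial is a polynomial square by unique factorization in $\C[z]$, hence of even degree), so for every odd $n\ge 1$ the set $\Ncal\cap\C[z]_n$ contains the Zariski open subset $\{f:\deg f=n\}$; the set of squares is obviously infinite. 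For (iii) at a finite $x\in\C$, polynomials have no finite poles so $\C[z]_n\subseteq\Ocal_x(\C)$, while $1/(z-x)^k$ for $k\ge 1$ lie in the complement.

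The remaining cases, namely (iii) at $x=\infty$ (and more generally any $x\in\Pro^1$) and (v), require a preliminary M\"obius step. For (iii) with $x=\infty$, the automorphism $\phi:z\mapsto 1/z$ sends $\Ocal_\infty(\C)$ to $\Ocal_0(\C)$, reducing to the finite-point case already handled; for a general $x\in\Pro^1$ one picks any fractional linear transformation sending $x$ to a finite point. For (v), the same $\phi$ sends $D=\C(z)-\C[z]$ to $\phi(D)=\C(z)-\C[1/z]$; since the only polynomials in $z$ that are also polynomials in $1/z$ are constants, $\phi(D)\cap\C[z]_n=\C[z]_n\setminus\C$ is Zariski open for each $n\ge 1$, and the complement $\C[1/z]$ is infinite.

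The main subtle point is recognising that in (iii) with $x=\infty$ and in (v) the set $D$ itself fails condition (a) --- indeed $\Ocal_\infty(\C)\cap\C[z]_n=\C$ and $(\C(z)-\C[z])\cap\C[z]_n=\emptyset$ --- so one has to first conjugate by a non-trivial $\C$-automorphism of $\C(z)$ to bring $D$ into a form where Koll\'ar's conjecture applies. Once that observation is made, the remaining verifications are elementary.
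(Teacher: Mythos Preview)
Your proof is correct and, for items (i), (ii), (iii), and (v), follows essentially the same line as the paper (the paper's terse ``take reciprocals'' for (iii) at $x=\infty$ and for (v) amounts exactly to your M\"obius substitution $z\mapsto 1/z$, and your residue justification for (ii) is a welcome elaboration of the paper's one-line claim that $1/(z-\lambda)\notin\Der$).

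For item (iv) you take a genuinely different and simpler route. The paper passes to an auxiliary Diophantine set
\[
\Ncal'=\{f\in\C(z): f\in\C\ \text{or both }f\text{ and }f+4\text{ are non-squares}\}
\]
and argues that $\Ncal'$ contains (a Zariski open subset of) each $\C[z]_n$ while missing the infinitely many elements $(\lambda z)^{-2}-2+(\lambda z)^2=((\lambda z)^{-1}-\lambda z)^2$. You instead observe directly that every polynomial of odd degree is a non-square in $\C(z)$, so $\Ncal$ itself already contains the nonempty Zariski open set $\{f:\deg f=n\}\subseteq\C[z]_n$ for all odd $n$; since the squares form an infinite complement, Koll\'ar's conjecture applies immediately. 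Your argument avoids the auxiliary construction entirely, at the modest cost of verifying the hypothesis only for odd $n$ rather than all $n$ --- which is all the conjecture requires.
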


We remark that the analogues for $\Q$ of the sets defined in items (iii), (iv) and (v) are in fact Diophantine, see \cite{Poonen, KoeZQ}.

For $n\in \{1,2,3,...,\infty\}$ and a finite set of points $S\subseteq \Pro^1(\C)$ we define the set of \emph{Campana points} in $\C(z)$ as
$$
\Ccal_{S,\ell} = \{f\in \C(z): \forall \alpha\in \Pro^1-S,\mbox{ if } v_\alpha(f)\le -1\mbox{ then } v_\alpha(f)\le -\ell\}.
$$ 
These sets interpolate between $\C[z]$ and $\C(z)$; in fact, taking $\ell=\infty$ and $S=\{\infty\}\subseteq \Pro^1$ we have $\Ccal_{S,\ell}=\C[z]$, while taking $\ell=1$ and $S$ arbitrary we get $\Ccal_{S,\ell}=\C(z)$.

 Campana points in the number field and function field setting have been the subject of considerable attention in recent years, in connection with Campana's conjectures and extensions of Manin's conjecture, see for instance \cite{Abr, AV, BVV, BY, Cam, GF, RTW, PSTV, VV} for recent developments and for the precise definitions (and generalizations) in the number field and function field setting. Recently,  De Rasis has shown that Campana points over $\Q$ admit a $\forall\exists$ first order definition (upcoming work), and one can ask whether they admit a Diophantine (i.e. existential) definition. In the function field setting we have:
\begin{proposition}\label{PropCampana}
Let $\ell\in \{1, 2, 3,...,\infty\}$ and let $S\subseteq \Pro^1$ be a finite set. If Conjecture \ref{ConjKollar} is true, then $\Ccal_{S,\ell}$ is not Diophantine, unless we are in one of the following two cases:
\begin{itemize}
\item[(a)] $\ell=1$, in which case $\Ccal_{S,\ell}=\C(z)$, or
\item[(b)] $\ell=\infty$ and $S=\emptyset$, in which case $\Ccal_{\emptyset, \infty}=\C$.
\end{itemize}
\end{proposition}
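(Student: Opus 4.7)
The plan is to apply Conjecture \ref{ConjKollar} to $D=\Ccal_{S,\ell}$. The hypotheses to verify are: (i) $D$ contains a Zariski open subset of $\C[z]_n=\C^{n+1}$ for infinitely many $n$, and (ii) $D$ is Diophantine. If both hold, the conjecture forces $\C(z)-D$ to be finite, contradicting an explicit infinite list of functions outside $D$. Hence (ii) must fail, i.e., $D$ is not Diophantine.

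When $S\ne \emptyset$, I first reduce to the case $\infty\in S$ by a change of variables. Any Möbius transformation $T\in \mathrm{PGL}_2(\C)$ induces a $\C$-algebra automorphism $\phi$ of $\C(z)$ by $\phi(f)=f\circ T$; from $v_\alpha(f\circ T)=v_{T(\alpha)}(f)$ one checks directly that $\phi(\Ccal_{T(S),\ell})=\Ccal_{S,\ell}$. Since ring automorphisms preserve Diophantineness (apply $\phi$ to the coefficients of any defining polynomials to transport an existential definition), the set $\Ccal_{S,\ell}$ is Diophantine if and only if $\Ccal_{T(S),\ell}$ is. Choosing $T$ with $T(s_0)=\infty$ for some $s_0\in S$ then reduces the problem to the case $\infty\in S$. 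When $S=\emptyset$ no such reduction is possible and I argue directly, noting that excluding cases (a) and (b) forces $2\le \ell<\infty$ in this subcase.

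Next I verify the Zariski-openness condition. If $\infty\in S$, every polynomial has its only pole at $\infty\in S$, so $\C[z]\subseteq \Ccal_{S,\ell}$; hence the entire affine space $\C[z]_n$ lies in $\Ccal_{S,\ell}$ for every $n$, and it is trivially Zariski open in itself. If $S=\emptyset$ and $2\le \ell<\infty$, then for each $n\ge \ell$ the locus of polynomials of degree exactly $n$ is a non-empty Zariski open subset of $\C[z]_n$ contained in $\Ccal_{\emptyset,\ell}$, since the unique pole of such a polynomial is at $\infty$ with multiplicity $n\ge \ell$.

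Finally I exhibit infinitely many elements of $\C(z)-\Ccal_{S,\ell}$. Since we are outside cases (a) and (b), we have $\ell\ge 2$. For every $\alpha\in \C-S$, which is a cofinite and hence infinite subset of $\C$, the function $1/(z-\alpha)$ has a simple pole at $\alpha\notin S$, thereby violating the Campana condition, so $1/(z-\alpha)\notin \Ccal_{S,\ell}$. Combined with the previous two steps, this contradicts Conjecture \ref{ConjKollar} under the assumption that $\Ccal_{S,\ell}$ is Diophantine. The argument presents no substantive obstacle; the only step requiring mild care is the Möbius transport, where one must fix conventions so that the automorphism $\phi$ and the induced action on $\Pro^1$ compose in the correct order, ensuring that poles are tracked with the right multiplicities.
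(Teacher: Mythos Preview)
Your proof is correct and follows essentially the same approach as the paper's: verify that $\Ccal_{S,\ell}$ contains a Zariski open subset of $\C[z]_n$ for infinitely many $n$, exhibit the infinite family $1/(z-\lambda)$ in the complement, and conclude via Conjecture~\ref{ConjKollar}. The only cosmetic difference is the case split---you split on $S\ne\emptyset$ versus $S=\emptyset$, while the paper splits on $\ell=\infty$ versus $1<\ell<\infty$---and you spell out the M\"obius transport a bit more carefully than the paper does.
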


We conclude this introduction by pointing out that the analogue of Koll\'ar's conjecture in positive characteristic is false, although for trivial reasons (see Section \ref{SecKollarConjProofs}).

\begin{proposition}\label{PropPosChar} Let $p$ be a prime and let $k$ be an algebraically closed field of characteristic $p$. There is a Diophantine subset $D\subseteq k(z)$ such that $k(z)-D$ is infinite and for every $n\ge 1$ the set $D$ contains a Zariski open subset of $k[z]_n=k^{n+1}$.
\end{proposition}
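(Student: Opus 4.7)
The plan is to exploit the Frobenius endomorphism of $k(z)$ directly. Since $k$ is an algebraically closed (hence perfect) field of characteristic $p$, the extension $k(z)/k(z)^p$ is purely inseparable of degree $p$ with basis $\{1,z,\ldots,z^{p-1}\}$, so every $f\in k(z)$ admits a unique representation $f=\sum_{i=0}^{p-1} z^i g_i^p$ with $g_i\in k(z)$. Explicitly, when $f=\sum_j a_j z^j\in k[z]$, one has $g_i=\sum_{q\ge 0} a_{pq+i}^{1/p} z^q$, using that $a\mapsto a^{1/p}$ is well-defined on $k$.

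The candidate set is
\[
D=\bigl\{f\in k(z) : \exists g_0,\ldots,g_{p-1},h\in k(z),\ f=\textstyle\sum_{i=0}^{p-1}z^i g_i^p\ \text{and}\ g_1 h=1\bigr\}.
\]
The clause $g_1 h=1$ encodes $g_1\ne 0$ positively, so $D$ is Diophantine over $k(z)$; by the uniqueness of the Frobenius decomposition, $D=\{f\in k(z) : g_1\ne 0\}$.

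To verify the two stated properties: first, the complement $k(z)\setminus D=\{f:g_1=0\}=k(z)^p+z^2 k(z)^p+\cdots+z^{p-1}k(z)^p$ is a proper $k(z)^p$-subspace of $k(z)$ (it omits the $zk(z)^p$ slot), and is therefore infinite. Second, for every $n\ge 1$ and $f=\sum_{j=0}^n a_j z^j\in k[z]_n$, the formula above gives $g_1=\sum_{q:\,pq+1\le n}a_{pq+1}^{1/p}z^q$, so $a_1\ne 0$ already forces $g_1\ne 0$; thus the non-empty Zariski open subset $\{f\in k[z]_n : a_1\ne 0\}$ of $k^{n+1}=k[z]_n$ is contained in $D$.

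There is essentially no technical obstacle: once the Frobenius decomposition of $k(z)$ over $k(z)^p$ is in hand, the construction of $D$ is completely explicit and the verification is a direct calculation. This is the ``trivial reason'' alluded to in the introduction---the Frobenius gives a canonical $p$-dimensional splitting of $k(z)$ with no analogue in characteristic zero, and picking off any one of its ``coordinates'' yields a Diophantine set violating the naive positive-characteristic version of Koll\'ar's conjecture.
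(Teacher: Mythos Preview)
Your proof is correct and takes essentially the same approach as the paper: both exploit the Frobenius decomposition $f=\sum_{i=0}^{p-1}z^i g_i^p$ over $k(z)^p$. The only difference is cosmetic---the paper takes $D=k(z)\setminus k(z^p)$ (i.e.\ \emph{some} $g_i$ with $1\le i\le p-1$ is nonzero) while you take the slightly smaller set where specifically $g_1\ne 0$; both choices work for identical reasons.
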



\section{Some Diophantine sets}\label{SecDioph}

In this section we collect several known examples of Diophantine sets in polynomial rings and fields of rational functions.
\begin{lemma}\label{LemmaCtPoly} Let $k$ be a field. For each integer $\alpha\ge 0$  the set $k[z]_{\alpha}\subseteq k[z]$ is Diophantine over $k[z]$.
\end{lemma}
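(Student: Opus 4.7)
The plan is to first show that $k=k[z]_0$ is Diophantine in $k[z]$, and then express membership in $k[z]_\alpha$ by existentially asserting the existence of $\alpha+1$ scalar coefficients.

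For the base case $\alpha=0$, I would claim that for $f\in k[z]$, one has $f\in k$ if and only if
$$
\exists g\in k[z]:\quad f^2 g = f.
$$
The key point is that this single equation handles both $f=0$ (taking $g=0$) and $f\in k^\times$ (taking $g=1/f\in k\subseteq k[z]$) in a unified way, while simultaneously ruling out non-constant polynomials. Indeed, if $f\ne 0$ then the relation $f^2g=f$ forces $fg=1$ in the integral domain $k[z]$, which makes $f$ a unit; and the units of $k[z]$ are exactly $k^\times$. This gives a positive existential definition of $k$ in $k[z]$, using no parameters other than those built into the ring operations.

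For general $\alpha\ge 0$, I would then characterize $f\in k[z]_\alpha$ by the formula
$$
\exists\, c_0,\ldots,c_\alpha,\, g_0,\ldots,g_\alpha \in k[z]:\quad \bigwedge_{i=0}^{\alpha} c_i^2 g_i = c_i \ \wedge\ f = c_0 + c_1 z + c_2 z^2 + \cdots + c_\alpha z^\alpha.
$$
The first conjunction forces each $c_i$ to be a constant by the base case, and the second equation uses the parameter $z\in k[z]$ (available since parameters from $k[z]$ are allowed) to reconstruct $f$ from its coefficients. Hence if the formula holds, then $f$ is a $k$-linear combination of $1,z,\ldots,z^\alpha$ and so $\deg(f)\le \alpha$; conversely, any such $f$ provides witnesses.

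Since this is a finite positive existential formula with polynomial equations over $k[z]$ (using the fixed elements $1,z,\ldots,z^\alpha\in k[z]$), it presents $k[z]_\alpha$ as a Diophantine subset of $k[z]$. There is no real obstacle in this argument; the only slightly nontrivial step is the choice of the single equation $f^2g=f$ to capture the union of $\{0\}$ with the units, and once the constants are Diophantine the rest is mechanical.
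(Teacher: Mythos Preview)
Your proposal is correct and follows exactly the route of the paper: first define $k$ as the set of elements that are zero or units, then obtain $k[z]_\alpha$ as $k$-linear combinations of $1,z,\ldots,z^\alpha$. Your single equation $f^2g=f$ is precisely the integral-domain encoding of the disjunction ``$f=0$ or $f$ is a unit'' that the paper alludes to.
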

\begin{proof} One sees that the field $k$ is Diophantine by taking $0$ or invertible elements. Then $k[z]_\alpha$ is obtained as $k$-linear combinations of $1,z,z^2,...,z^\alpha$.
\end{proof}

The next result is a special case of the general theorems in \cite{MoretBailly}.

\begin{lemma} Let $A$ be a noetherian integral domain. The set $A[z]-\{0\}$ is Diophantine in $A[z]$.
\end{lemma}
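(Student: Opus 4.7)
The plan is to invoke Moret-Bailly's general theorem on diophantine definability of nonvanishing and to verify that the polynomial ring $A[z]$ satisfies its hypotheses.

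First I would check that $R = A[z]$ is itself a noetherian integral domain: noetherianity follows from Hilbert's basis theorem applied to $A$, and being a domain is preserved by polynomial extensions. These are the only structural properties of $R$ used in Moret-Bailly's framework.

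Next I would unwind what Moret-Bailly's construction provides. To each $f \in R$ he attaches an auxiliary algebraic system (built for example from torsors under semiabelian schemes over $R$, or from quadratic forms of Pfister type) whose solvability in $R$ is equivalent to $f \ne 0$. One direction is given by an explicit construction producing a witness from an auxiliary rational point; the other uses a local-global argument making essential use of the valuations and residue fields of $R$. Since $R = A[z]$ has abundantly many height-one primes, coming from the irreducible polynomials over $\mathrm{Frac}(A)$, and well-behaved residue fields at these primes, the general construction goes through in our setting with no modifications. This produces the desired polynomial system $P_1(f, \mathbf{y}) = \cdots = P_r(f, \mathbf{y}) = 0$ whose solvability in $R^m$ is equivalent to $f \ne 0$.

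The main potential obstacle is largely bookkeeping: Moret-Bailly's diophantine formula is not short once written out explicitly, and matching the general setup of \cite{MoretBailly} to the specific case of polynomial rings over noetherian integral domains requires a careful comparison of definitions (choice of auxiliary variety, checking that no spurious solutions arise when $f$ is zero, and so on). No essentially new ideas beyond those of \cite{MoretBailly} are needed to obtain the lemma.
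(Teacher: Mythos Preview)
Your approach is the same as the paper's: the paper does not give a proof at all and simply records the lemma as ``a special case of the general theorems in \cite{MoretBailly}'', so invoking Moret-Bailly after checking that $A[z]$ is a noetherian integral domain (via Hilbert's basis theorem) is exactly what is intended. Your second and third paragraphs, which speculate about the internal mechanics of Moret-Bailly's argument (semiabelian torsors, Pfister forms, height-one primes), are unnecessary for the citation and are somewhat imprecise as a summary of that paper, but they do not affect the correctness of the reduction.
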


In general, it is a difficult question whether a field $k$ is Diophantine in $k(z)$; for instance, the case $k=\Q$ remains open. Nevertheless one has (cf. \cite{KoeLarge})

\begin{lemma}\label{LemmaCtRat} Let $L$ be a large field. Then $L$ is Diophantine in $L(z)$.
\end{lemma}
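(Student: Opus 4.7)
The plan is to exhibit an existential formula over $L(z)$ with parameters in $L$ whose solution set is exactly $L\subseteq L(z)$. The key property of large fields to be exploited is Pop's theorem: $L$ is large if and only if $L$ is existentially closed in the Laurent series field $L((t))$ for any variable $t$, equivalently, in any Henselian valued field with residue field $L$. This yields a transfer mechanism between existential statements over $L$ and over certain completions of $L(z)$.

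First, I would equip $L(z)$ with the $z$-adic valuation $v$ whose completion is $L((z))$ and whose residue field is $L$, so that $L$ sits existentially closed inside $L((z))$ by largeness. Diophantine statements satisfied by elements of $L(z)\subseteq L((z))$ then descend, after taking residues, to Diophantine statements in $L$; conversely, any existentially defined subset of $L$ lifts to $L((z))$-solutions of the same system of equations.

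Next, following the construction in \cite{KoeLarge}, I would build an auxiliary variety $V_f$ over $L$ depending polynomially on a parameter $f$ (typically a smooth conic, or the norm-one torus of a suitable quadratic extension of $L$) with the following two features. On the one hand, for every $c\in L$ there are explicit $L$-points of $V_c$, hence $L(z)$-points of $V_c$, so the existential condition is satisfied by every constant. On the other hand, for $f\in L(z)\setminus L$, any candidate $L(z)$-point of $V_f$ is to be specialized through the place $v$: its residue would, by largeness combined with the Henselianity of $L((z))$, produce an $L$-solution of a geometric condition that contradicts the non-constancy of $f$. The existential formula asserting that $V_f$ has an $L(z)$-point then cuts out precisely $L\subseteq L(z)$.

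The main obstacle is the construction and verification of $V_f$: one has to arrange it so that largeness is exactly the ingredient that excludes $L(z)$-solutions when $f\notin L$. This is the technical core of Koenigsmann's argument, carried out in \cite{KoeLarge} via Pfister-form or norm-form constructions together with a careful specialization at $v$. Since Lemma \ref{LemmaCtRat} is used below only as a black box, I would present the result by quoting \cite{KoeLarge} after outlining the above strategy.
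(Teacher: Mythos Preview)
The paper gives no proof of this lemma at all: it is stated with the parenthetical ``(cf.\ \cite{KoeLarge})'' and used as a black box. Your proposal likewise ultimately defers to \cite{KoeLarge}, so it is in line with the paper's treatment; the strategic outline you add (largeness via existential closedness in $L((z))$, an auxiliary variety $V_f$, specialization at the $z$-adic place) is a reasonable if informal sketch of Koenigsmann's argument, though since the paper itself omits all of this you could simply cite the reference and move on.
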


\begin{corollary}\label{LemmaCtRatPoly} Let $L$ be a large field. Then $L$ is Diophantine in $L(t)[z]$.
\end{corollary}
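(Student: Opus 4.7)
The plan is to derive this corollary by transitivity from two Diophantine definitions already available to us. First I would observe that the field $L(t)$ is Diophantine in the polynomial ring $L(t)[z]$: this is just Lemma \ref{LemmaCtPoly} applied with base field $k = L(t)$, since the argument there shows that $k = k[z]_0$ is definable as the set of elements of $k[z]$ that are either $0$ or units of $k[z]$, which is well-defined existentially over $k[z]$.

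Second, I would invoke Lemma \ref{LemmaCtRat}, but with the variable $t$ playing the role of the transcendental. Since $t$ is by assumption a transcendental variable over $L$, the lemma (which only depends on $L$ being large) gives that $L$ is Diophantine in $L(t)$. So we have a Diophantine definition of $L$ inside $L(t)$, and a Diophantine definition of $L(t)$ inside $L(t)[z]$.

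Finally, I would compose the two definitions. Given a Diophantine formula defining $L$ in $L(t)$, say $a \in L$ iff there exist $b_1,\dots,b_m \in L(t)$ such that a system of polynomial equations with coefficients in $L(t)$ holds, I would reinterpret each $b_i \in L(t)$ using the Diophantine definition of $L(t)$ in $L(t)[z]$: each quantifier $\exists b_i \in L(t)$ becomes $\exists b_i \in L(t)[z]$ subject to $b_i$ lying in the Diophantine set defining $L(t)$. The resulting formula is a Diophantine definition of $L$ over $L(t)[z]$, since the polynomial equations originally had coefficients in $L(t) \subseteq L(t)[z]$.

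No real obstacle is expected: the only subtlety is to confirm that transitivity of Diophantineness across a chain of subrings works as desired, and this is standard (one just needs the intermediate ring to be Diophantine in the larger ring so that the existential quantifiers from the first definition can be relativized).
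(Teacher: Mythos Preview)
Your proposal is correct and follows exactly the same route as the paper's proof: apply Lemma~\ref{LemmaCtPoly} with $k=L(t)$ to get that $L(t)$ is Diophantine in $L(t)[z]$, apply Lemma~\ref{LemmaCtRat} (with $t$ as the transcendental) to get that $L$ is Diophantine in $L(t)$, and compose by the standard transitivity of Diophantine definability. The paper's one-line proof omits the explicit discussion of transitivity, but your expanded version is just a spelled-out form of the same argument.
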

\begin{proof} By Lemma \ref{LemmaCtPoly} with $k=L(t)$ and Lemma \ref{LemmaCtRat}.
\end{proof}

The next result directly follows from work of Denef \cite{Denef}.

\begin{lemma} Let $k$ be a field of characteristic $0$. Then $\Z$  and $\Q$ are Diophantine in $k[z]$.
\end{lemma}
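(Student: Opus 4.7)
The plan is to apply Denef's classical Pell equation technique: produce a Diophantine definition of $\Z \subseteq k[z]_0 = k$ using polynomial solutions of $X^2 - (z^2-1) Y^2 = 1$, then bootstrap to $\Q$ by taking ratios.

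For $\Z$, I would first classify the $(X, Y) \in k[z]^2$ satisfying the Pell equation. The coordinate ring $R = k[z,w]/(w^2 - (z^2-1))$ of the affine conic carries the $k$-point $(1,0)$, hence is isomorphic to $\G_m$ over $k$ with parameter $u = z+w$; consequently the units of $R$ are $k^\times \cdot (z+w)^{\Z}$. A Pell pair corresponds to a unit $X + Yw \in R$ of norm $1$ over $k[z]$; writing $X + Yw = c(z+w)^n$ and applying the conjugation $w \mapsto -w$ gives $X - Yw = c(z+w)^{-n}$, whose product with the first equation forces $c^2 = 1$, so $c = \pm 1$. Thus the solution set is $\{\pm(T_n, U_{n-1}) : n \in \Z\}$ where $T_n, U_{n-1}$ are the Chebyshev polynomials appearing in $(z+w)^n = T_n(z) + U_{n-1}(z) w$. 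Expanding by the binomial theorem and substituting $z = 1$ kills all terms containing $(z^2-1)$ as a factor, leaving $Y_n(1) = \binom{n}{1} = n$; since $\cha k = 0$ these values fill out all of $\Z \subseteq k$ as $n$ varies.

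Therefore the set
$$
\{c \in k[z] : \exists X, Y, Q \in k[z],\ X^2 - (z^2-1) Y^2 = 1 \mbox{ and } Y - c = (z-1) Q\}
$$
is Diophantine in $k[z]$ and consists exactly of those $c$ with $c(1) \in \Z$. Intersecting it with $k[z]_0 = k$, which is Diophantine by Lemma \ref{LemmaCtPoly}, exhibits $\Z$ as a Diophantine subset of $k[z]$. For $\Q$, once $\Z$ is Diophantine, so is $\Z - \{0\} = \Z \cap (k[z] - \{0\})$, because $k[z] - \{0\}$ is Diophantine by \cite{MoretBailly}; then the ratio description
$$
\Q = \{c \in k : \exists a \in \Z,\ \exists b \in \Z - \{0\},\ b c = a\}
$$
gives a positive existential definition of $\Q$ over $k[z]$.

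The only substantive point is the unit classification for $R$, equivalently the identification of all polynomial Pell pairs with the Chebyshev pairs; the rest is a one-line binomial computation and formal Diophantine bookkeeping. In characteristic $0$ this classification is routine, with no Frobenius or wild roots of unity to worry about, so I anticipate no genuine obstacles.
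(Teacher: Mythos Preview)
Your proposal is correct and is precisely Denef's Pell-equation argument from \cite{Denef}, which is exactly what the paper invokes; the paper gives no proof of its own beyond that citation. (The appeal to \cite{MoretBailly} for $\Z-\{0\}$ is harmless overkill: nonzero elements of $\Z\subseteq k$ are already units in $k[z]$, so $\exists d\,(cd=1)$ suffices.)
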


\begin{corollary} The set $\R-\Z$ is Diophantine in $\R[z]$.
\end{corollary}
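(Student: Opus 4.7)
The plan is to express membership in $\R - \Z$ as a positive existential formula built from the three ingredients that are already known to be Diophantine over $\R[z]$: the subring $\R$ of constants (Lemma \ref{LemmaCtPoly}), the subring $\Z$ (the preceding lemma, via Denef), and polynomial equations.

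The key arithmetic observation is that a real number $a$ fails to be an integer if and only if it lies strictly between two consecutive integers, i.e., there exists $n\in\Z$ with $n<a<n+1$. Over $\R$, strict positivity is positive existentially definable: $r>0$ iff there exists $s\in\R$ with $rs^2=1$ (the equation forces $r$ to equal $1/s^2$, which is a positive real, and conversely every positive real is of this form since every non-negative real is a square). So I would define
$$
a\in\R-\Z \ \iff\  a\in\R \ \wedge\ \exists\, n,s,t\in\R[z]\ :\ n\in\Z,\ (a-n)s^2=1,\ (n+1-a)t^2=1.
$$
The forward direction is immediate from the discussion above: pick $n=\lfloor a\rfloor$ and take $s,t\in\R$ to be reciprocals of square roots of $a-n$ and $n+1-a$, respectively. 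For the backward direction, the equations force $a-n$ and $n+1-a$ to be (nonzero) squares of real numbers, hence strictly positive, so $n<a<n+1$ and $a\notin\Z$.

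To see that this formula is Diophantine over $\R[z]$, I would observe that each atomic ingredient is Diophantine: the conditions $a\in\R$ and $n\in\Z$ are Diophantine over $\R[z]$ by the lemmas just quoted, and the two equations $(a-n)s^2=1$ and $(n+1-a)t^2=1$ are polynomial equations with coefficients in $\R[z]$. Conjunction of Diophantine conditions and existential quantification over $\R[z]$ preserves the Diophantine property, so the whole formula defines a Diophantine subset of $\R[z]$, which is exactly $\R-\Z\subseteq \R\subseteq\R[z]$. There is no real obstacle here — the argument is just the standard trick of using squares to define positivity, and the only thing one has to be slightly careful about is that auxiliary variables such as $s,t$, although quantified over all of $\R[z]$, are automatically forced into $\R$ by the equations (since $(a-n)^{-1}\in\R$ forces $s^2\in\R$ and hence $s\in\R$), so no extra Diophantine constraint on $s,t$ is needed.
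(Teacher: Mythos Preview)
Your proof is correct and follows essentially the same route as the paper: both arguments rest on the equivalence $a\notin\Z \iff \exists\, n\in\Z,\ n<a<n+1$, together with the Diophantineness of $\R$ and $\Z$ in $\R[z]$. The paper simply asserts that the order relation $<$ on $\R$ is Diophantine in $\R[z]$, whereas you spell out the standard ``positivity via squares'' formula $rs^2=1$ and verify the easy bookkeeping that the auxiliary witnesses $s,t$ are forced to be constants.
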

\begin{proof} We have that $\R$ and $\Z$ are both Diophantine in $\R[z]$, and so is the relation $<$ in $\R$ as a subset of $\R[z]$. Then we observe that for $a\in \R$ we have that $a\notin \Z$ if and only if $\exists n\in \Z, n<a<n+1$.
\end{proof}

The next result is also due to Denef, which can be obtained after a simple modification of the proof of Lemma 3.5 in \cite{Denef}.

\begin{lemma} The valuation ring $\Ocal_\infty(\R)=\{f\in \R(z) : v_{\infty}(f)\ge 0\}\subseteq \R(z)$ is Diophantine in $\R(z)$.
\end{lemma}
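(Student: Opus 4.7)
The plan is to mirror Denef's argument \cite{Denef} for a finite real place of $\mathbb{R}(z)$ and transfer it to the place at infinity. I would rely on two Diophantine ingredients already at hand: (a) the constant field $\mathbb{R}$ is Diophantine in $\mathbb{R}(z)$ by Lemma \ref{LemmaCtRat}, and (b) the set $\Sigma(\mathbb{R}(z))$ of finite sums of squares in $\mathbb{R}(z)$ is visibly Diophantine. The crucial analytic input is Pfister's theorem: in the rational function field $\mathbb{R}(z)$, an element is a sum of (two) squares if and only if it is $\geq 0$ at every real point where it is defined. Together with (a), this makes the unary predicate "$h$ is non-negative on its real locus" and the binary predicate "$h_1 \leq h_2$ on the common real locus" Diophantine over $\mathbb{R}(z)$.

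With these tools, the condition $v_\infty(f) \geq 0$ would be encoded by the positive-existential statement that there is a constant $a \in \mathbb{R}$ (the intended value of $f$ at infinity) together with auxiliary witnesses (a bound $C \in \mathbb{R}$ and sum-of-squares representations) that force $f - a$ to be small at infinity while simultaneously taming the behavior at the real poles of $f$. The forward direction would be straightforward: if $v_\infty(f) \geq 0$, then $f$ has a well-defined limit $a \in \mathbb{R}$ at infinity, $z(f-a)$ has non-negative valuation at $\infty$, and one produces the required square-sum witnesses after clearing real poles by an auxiliary positive polynomial (e.g.\ a power of $1+z^2$ times the square of a denominator of $f-a$).

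The non-trivial direction is the converse, and this is exactly where Denef's Lemma 3.5 does the work. His formula is tailored so that any choice of witnesses provides enough quantitative control on $f-a$ along the real line that, combined with the rationality of $f$, one can rule out a pole at infinity. The main obstacle in adapting his argument will be choosing the auxiliary rational function that plays the role of the uniformizer: Denef used a uniformizer at a finite real place, and I would replace it by the uniformizer $1/z$ at the place $\infty$. Concretely, I would verify that applying the $\mathbb{R}$-algebra involution $z \mapsto 1/z$ to Denef's Diophantine definition of the valuation ring at $0$ yields an equivalent Diophantine definition of the valuation ring at $\infty$, and that this involution preserves the positive-existential shape of the formula (because it is simply a birational change of variable in $\mathbb{R}(z)$).

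The hard part is therefore not producing new ideas but correctly bookkeeping Denef's sum-of-squares witnesses through this involution, so that the formula still quantifies only over elements of $\mathbb{R}(z)$ and $\mathbb{R}$ (both Diophantine) and does not secretly require access to the denominator of $f$ as a polynomial, which is not yet known to be Diophantine. Once this bookkeeping is in place, the equivalence $v_\infty(f) \geq 0 \Leftrightarrow (\exists \text{ witnesses})$ follows directly from Pfister's theorem and the original Denef lemma applied at $0$.
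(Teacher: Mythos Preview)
The paper does not supply its own proof of this lemma; it simply records that the result ``can be obtained after a simple modification of the proof of Lemma~3.5 in \cite{Denef}.'' Your plan is exactly such a modification, so the approaches coincide at the level the paper commits to.

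One remark that simplifies your write-up considerably: the bookkeeping concern you raise in the last paragraph is unnecessary. The map $\sigma\colon \R(z)\to\R(z)$ given by $z\mapsto 1/z$ is an $\R$-algebra automorphism that exchanges the places $0$ and $\infty$. If $D\subseteq \R(z)$ is Diophantine, witnessed by a system $F_j(x,\mathbf{y})=0$ with $F_j\in \R(z)[x,\mathbf{y}]$, then $\sigma(D)$ is Diophantine, witnessed by the system $F_j^{\sigma}(x,\mathbf{y})=0$ obtained by applying $\sigma$ to the coefficients. No inspection of the internal structure of Denef's formula is required, and in particular there is no hidden appeal to ``the denominator of $f$ as a polynomial.'' So once you have Denef's Diophantine definition of the valuation ring at a finite real place, the involution argument is one line, not a delicate transfer. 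The longer direct construction you outline in the first two paragraphs (choosing $a=\lim_{z\to\infty}f(z)$, clearing poles with a positive polynomial, invoking Pfister) is also viable and is essentially what ``modifying the proof'' would mean if done from scratch, but the automorphism route makes it superfluous.
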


\begin{corollary} The set 
$$
\{(f,g) \in \R(z)^2 : v_\infty(f)=v_\infty(g)\}
$$
is Diophantine in $\R(z)$.
\end{corollary}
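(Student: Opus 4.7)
The plan is to reduce the equality $v_\infty(f)=v_\infty(g)$ to two inequalities of the form $v_\infty(g)\ge v_\infty(f)$ and $v_\infty(f)\ge v_\infty(g)$, and express each inequality as a positive existential statement using that $\Ocal_\infty(\R)$ is Diophantine (by the preceding lemma).

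More precisely, I would claim the following positive existential definition over $\R(z)$:
\begin{equation*}
v_\infty(f)=v_\infty(g)\ \Longleftrightarrow\ \exists\, h_1,h_2\in \Ocal_\infty(\R)\ :\ f\,h_1=g\ \text{and}\ g\,h_2=f.
\end{equation*}
Since $\Ocal_\infty(\R)\subseteq \R(z)$ is Diophantine, quantifying over it is allowed in a Diophantine definition, and the two polynomial equalities $fh_1-g=0$ and $gh_2-f=0$ are already polynomial conditions. So granting the equivalence, the set is Diophantine.

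For the equivalence itself, the main step is to check the three possible scenarios. If $f$ and $g$ are both nonzero, then $fh_1=g$ forces $h_1=g/f$ and the condition $h_1\in \Ocal_\infty(\R)$ becomes $v_\infty(g)-v_\infty(f)\ge 0$; symmetrically for $h_2=f/g$. Together these two conditions are exactly $v_\infty(f)=v_\infty(g)$. If $f=g=0$, choosing $h_1=h_2=0\in \Ocal_\infty(\R)$ trivially witnesses the existential, matching the convention $v_\infty(0)=+\infty=v_\infty(0)$. If exactly one of $f,g$ is zero, say $f=0\ne g$, then $fh_1=0\ne g$ has no solution, correctly ruling out equality of valuations.

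The only potential obstacle is the handling of the zero polynomial/rational function, but the multiplicative formulation above absorbs the zero case automatically, so no separate case analysis is needed in the definition itself. Everything else is a direct appeal to the Diophantineness of $\Ocal_\infty(\R)$ established by Denef's result cited above.
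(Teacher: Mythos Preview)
Your argument is correct and is exactly the natural derivation the paper has in mind; the paper states this corollary without proof immediately after the lemma that $\Ocal_\infty(\R)$ is Diophantine, and your reduction of $v_\infty(f)=v_\infty(g)$ to the two divisibility conditions $f\mid g$ and $g\mid f$ in $\Ocal_\infty(\R)$ is the standard way to spell this out.
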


\begin{corollary} The set 
$$
\{(f,g) \in \R[z]^2 : \deg(f)=\deg(g)\}
$$
is Diophantine in $\R[z]$.
\end{corollary}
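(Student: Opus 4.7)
The plan is to reduce to the preceding corollary on equality of valuations at infinity in $\R(z)$, and then transfer the resulting Diophantine definition from $\R(z)$ down to $\R[z]$ by clearing denominators. Concretely, the key equivalence I would establish for $f,g\in\R[z]$ is
\[
\deg(f)=\deg(g) \iff \bigl(v_\infty(f)=v_\infty(g)\text{ in }\R(z)\bigr) \text{ or } (f\in\R \text{ and } g\in\R).
\]
One checks the cases directly. When both $f,g$ are nonzero, $\deg=-v_\infty$ on $\R[z]\setminus\{0\}$, so the first disjunct suffices. The second disjunct is needed only to handle the mild edge case forced by the convention $\deg(0)=0$ from Section \ref{SecPolyIntro}: namely, when one of $f,g$ is zero while the other is a nonzero constant, both have degree $0$ but their valuations at infinity differ.

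Next, each disjunct is Diophantine over $\R[z]$. The second is immediate from Lemma \ref{LemmaCtPoly} with $\alpha=0$, which gives that $\R=\R[z]_0$ is Diophantine. For the first, by the previous corollary there exist polynomials $F_1,\ldots,F_r\in\R(z)[X,Y,T_1,\ldots,T_m]$ such that $v_\infty(f)=v_\infty(g)$ in $\R(z)$ iff there exist $y_1,\ldots,y_m\in\R(z)$ with $F_j(f,g,y_1,\ldots,y_m)=0$ for each $j$. After multiplying by a common denominator one may assume $F_j\in \R[z][X,Y,T_1,\ldots,T_m]$. Writing each $y_i=a_i/b_i$ with $a_i,b_i\in\R[z]$ and $b_i\neq 0$, and multiplying each $F_j$ by a sufficiently high power of $\prod_i b_i$, one obtains polynomials $\tilde F_j\in\R[z][X,Y,A_1,B_1,\ldots,A_m,B_m]$ whose vanishing at $(f,g,a_1,b_1,\ldots,a_m,b_m)$ is equivalent to that of the $F_j$ at $(f,g,a_1/b_1,\ldots,a_m/b_m)$. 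Since $\R[z]\setminus\{0\}$ is Diophantine in $\R[z]$ by the Moret-Bailly lemma recalled above, this yields a Diophantine definition of the first disjunct.

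Finally, to merge the two Diophantine disjuncts into a single Diophantine definition I would use that $\R[z]$ is formally real (hence a conjunction $P_1=\cdots=P_k=0$ is equivalent to $\sum_i P_i^2=0$) and an integral domain (so $P=0\vee Q=0$ is equivalent to $PQ=0$). The step I expect to be most delicate is the denominator-clearing: it is conceptually routine but requires care in tracking the powers of $b_i$ needed and in verifying that existence of $\tilde F_j$-witnesses with $b_i\neq 0$ really does correspond to existence of $\R(z)$-witnesses for the original $F_j$. The edge-case analysis forced by $\deg(0)=0$ is a secondary bookkeeping point that must be handled correctly.
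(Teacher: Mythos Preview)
Your proposal is correct and follows the natural route implied by the paper's ordering of results: the paper states this as an unproved corollary immediately after the $\R(z)$ version, so deducing it by clearing denominators from the Diophantine definition of $\{v_\infty(f)=v_\infty(g)\}$ in $\R(z)$ is exactly the intended argument. Your treatment of the edge case arising from the convention $\deg(0)=0$ and your use of Moret-Bailly's lemma for $\R[z]\setminus\{0\}$ are both appropriate; the denominator-clearing step is indeed routine once one fixes, for each $F_j$, the exponent $N_i=\max_\alpha \alpha_i$ of $b_i$ needed to clear the $i$-th variable.
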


By the previous corollary and results of Demeyer \cite{DemeyerNF} we deduce

\begin{lemma}\label{LemmaQzinRz} The ring $\Q[z]$ is Diophantine in $\R[z]$.
\end{lemma}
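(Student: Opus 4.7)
The plan is to combine three ingredients available in the present setting: Denef's result that $\Q$ (and hence $\Z$) is Diophantine in $\R[z]$, the previous corollary giving that $\deg(f)=\deg(g)$ is Diophantine in $\R[z]$, and Demeyer's DPRM property for $\Q[z]$. From the first two ingredients one deduces that, for each fixed integer $n\ge 0$, the set $\Q[z]_n=\{f\in\Q[z]:\deg(f)\le n\}$ is Diophantine in $\R[z]$: indeed $\deg(f)\le n$ is Diophantine by comparing degrees with $z^n$, and on this set one can write $f=a_0+a_1z+\cdots+a_nz^n$ and existentially require each $a_i$ to lie in $\Q$, which is Diophantine by Denef. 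Thus $\Q[z]=\bigcup_{n\ge 0}\Q[z]_n$ is a countable union of Diophantine subsets of $\R[z]$, and the task reduces to producing a \emph{single} Diophantine definition uniformly across degrees.

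Collapsing this countable union is where Demeyer's DPRM enters. The DPRM property for $\Q[z]$ supplies a Diophantine G\"odel-style $\beta$-function coding arbitrary finite sequences of rationals by single integer parameters, together with a Diophantine ``assembly'' predicate asserting, for given $N\in\Z$ and $n\in\N$, that a polynomial $g$ equals $\sum_{i=0}^{n}\beta(N,i)\,z^i$. Because the defining polynomial equations of this predicate involve only $\Z$- and $\Q$-coefficients (Diophantine in $\R[z]$ by Denef) together with literal powers of $z$ (which live in $\R[z]$), the predicate transfers to a Diophantine condition over $\R[z]$. The desired definition is then: $f\in\Q[z]$ if and only if there exist $N\in\Z$ and $g\in\R[z]$ such that $g$ is the DPRM-assembly of the rational sequence coded by $N$ of length $\deg(f)$, and $f=g$.

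The hardest step, and the only one that genuinely requires Demeyer's theorem rather than Denef and the previous corollary alone, is the Diophantine expression of the variable-length sum $g=\sum_{i=0}^{\deg f}\beta(N,i)\,z^i$: for any fixed $n$ this is trivial, but the uniformization over $n$ is exactly the recursion-turned-Diophantine content supplied by DPRM. A companion point one has to verify is that the assembly predicate, built Diophantinely over $\Q[z]$, does not admit spurious solutions when its auxiliary variables are interpreted in the larger ring $\R[z]$; this is arranged by ensuring that all auxiliary variables needed in the $\beta$-function machinery range over $\Z$ (Diophantine in $\R[z]$), so that enlarging the ambient ring introduces no new witnesses. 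Once these points are secured, the Diophantine definition described above captures precisely $\Q[z]$ inside $\R[z]$, completing the proof.
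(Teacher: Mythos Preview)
Your overall plan matches the paper's: combine the Diophantineness of degree equality (the previous corollary) with Demeyer's machinery. The paper's proof is the one-liner ``by the previous corollary and results of Demeyer,'' so your sketch is actually more detailed; but the crucial transfer step in your last paragraph has a gap.

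The problem is the claim that the assembly predicate $g=\sum_{i=0}^{n}\beta(N,i)\,z^i$ can be made Diophantine over $\R[z]$ by forcing all auxiliary variables into $\Z$. The $\beta$-function itself lives over $\Z$, fine; but expressing a \emph{variable-length} polynomial sum Diophantinely requires encoding the sequence of partial sums (equivalently, extracting the individual coefficients of $g$), and every such encoding in Denef's or Demeyer's framework uses auxiliary \emph{polynomial} variables---e.g.\ a carrier $H$ and a large-degree base $T$ realizing a digit expansion $H=\sum_i h_i T^i$. You cannot push those witnesses into $\Z$ without already being able to confine them to some Diophantine subring like $\Q[z]$, which is precisely what you are trying to prove. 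So the ``no spurious witnesses over $\R[z]$'' clause is circular as stated.

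The correct route---implicit in the paper's citation of Demeyer---is not to transfer a $\Q[z]$-formula to $\R[z]$, but to note that Demeyer's constructions are uniform: his sequence-encoding and coefficient-extraction predicates are assembled from two black boxes (a Diophantine copy of $\Z$ and a Diophantine degree relation) together with ring-theoretic identities valid in any $k[z]$. Since both black boxes are available over $\R[z]$ by Denef and by the previous corollary, one reruns Demeyer's argument \emph{inside} $\R[z]$ and obtains the needed predicates there directly, with the auxiliary polynomial variables now legitimately ranging over $\R[z]$. Once that is in place, your final description of $\Q[z]$ as ``those $f$ whose coefficient sequence is coded by some integer $N$'' goes through.
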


Regarding $\Q[z]$, the following is proved in \cite{DemeyerNF}, building on work of Denef \cite{DenefZT}.

\begin{lemma}\label{LemmaDPRMinQz} The ring $\Q[z]$ has the DPRM property: listable sets in $\Q[z]$ are the same as Diophantine sets over $\Q[z]$. 
\end{lemma}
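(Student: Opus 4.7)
The plan is to reduce the claim to Denef's DPRM theorem for $\Z[z]$ via an encoding of $\Q[z]$ by pairs in $\Z[z] \times \Z_{>0}$. The direction ``Diophantine implies listable'' is routine: enumerate $\Q[z]^n$ effectively through rational coefficient vectors, then for each candidate tuple run a dovetailed search over witness tuples and evaluate the finitely many defining polynomial equations.

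For the converse, start with a listable set $R \subseteq \Q[z]^n$ and use the surjective computable map $e: \Z[z] \times \Z_{>0} \to \Q[z]$ given by $e(g,N) = g/N$, extended coordinatewise. The preimage
$$
\tilde R = \{(g_1,N_1,\ldots,g_n,N_n) \in (\Z[z]\times\Z_{>0})^n : (g_1/N_1,\ldots,g_n/N_n)\in R\}
$$
is a listable subset of $\Z[z]^{2n}$ (using that $\Z\subseteq\Z[z]$ is listable), hence Diophantine over $\Z[z]$ by Denef's theorem. So there exist polynomials $P_1,\ldots,P_r \in \Z[z][X_1,Y_1,\ldots,X_n,Y_n,W_1,\ldots,W_m]$ with
$$
\tilde R = \{(\mathbf{g},\mathbf{N}) : \exists\, \mathbf{w}\in\Z[z]^m,\ P_j(\mathbf{g},\mathbf{N},\mathbf{w})=0\text{ for all }j\}.
$$
To descend this to a Diophantine definition of $R$ over $\Q[z]$, I would invoke two inputs: (a) $\Z$ is Diophantine in $\Q[z]$, as stated in the excerpt (and hence so is $\Z_{>0}$); and (b) $\Z[z]$ itself is Diophantine in $\Q[z]$. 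Granted (a) and (b), the formula
$$
\exists\, g_i \in \Z[z],\ N_i \in \Z_{>0},\ \mathbf{w} \in \Z[z]^m:\ \bigwedge_i (N_i f_i = g_i) \,\wedge\, \bigwedge_j P_j(\mathbf{g},\mathbf{N},\mathbf{w})=0
$$
defines $R$ over $\Q[z]$: each restricted quantifier is replaced by an unrestricted quantifier in $\Q[z]$ together with the Diophantine membership condition from (a) or (b), and each $P_j$ already has coefficients in $\Z[z]\subseteq\Q[z]$.

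The main obstacle is ingredient (b), the Diophantine definition of $\Z[z]$ inside $\Q[z]$. This is genuinely nontrivial because the degree of a polynomial is a priori unbounded, so one cannot characterize $\Z[z]$ as integer combinations of any fixed finite set of monomials. The resolution, which is the technical core of Denef's earlier paper on $\Z[T]$ upon which Demeyer builds, is a Diophantine encoding of arbitrary finite sequences of integers as pairs of polynomials, in the spirit of a G\"odel $\beta$-function, obtained from Pell-type identities available in $\Z[z]$. Once such sequence coding is positive-existentially available, the property ``$f$ has integer coefficients of every degree'' becomes Diophantinely expressible, and the transfer scheme above completes the proof.
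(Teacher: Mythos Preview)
The paper does not supply its own proof of this lemma: immediately before the statement it says ``the following is proved in \cite{DemeyerNF}, building on work of Denef \cite{DenefZT},'' and no argument follows. So there is no in-paper proof to compare your attempt against beyond that attribution.

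Your outline is a reasonable way to recover the result and is consistent with the paper's attribution: reduce to DPRM over $\Z[z]$ (this is indeed what Denef's \cite{DenefZT} establishes), then transfer to $\Q[z]$ using Diophantine definitions of $\Z$ and of $\Z[z]$ inside $\Q[z]$. You correctly flag that the substantive step is (b), the Diophantineness of $\Z[z]$ in $\Q[z]$, and that this hinges on a positive-existential sequence-coding mechanism of the Denef type. As written, however, your argument is a reduction sketch rather than a proof: you invoke DPRM for $\Z[z]$ and the sequence coding as black boxes without reproducing them, which is exactly what the paper does by citing \cite{DemeyerNF} and \cite{DenefZT}. In that sense your proposal and the paper's treatment are at the same level of detail; neither proves the lemma from scratch, and both defer the real work to Denef and Demeyer.
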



\section{Non-Diophantine sets of polynomials}\label{SecPoly}

In this section we prove the results from Section \ref{SecPolyIntro}.

\begin{proof}[Proof of Theorem \ref{ThmPoly}] Let $R=A[z]$. There are polynomials $F_1,..., F_r\in R[x,y_1,...,y_m]$ such that 
$$
D = \{a\in R : \mbox{ there is }{\bf b}\in R^m\mbox{ such that }F_j(a,{\bf b})=0\mbox{ for each }1\le j\le r\}.
$$
For any integers $\alpha,\beta \ge 0$ let $G_{\alpha,\beta}$ be the set of all solutions to the system of equations 
$$
\begin{cases}
F_1(x,{\bf y})=0\\
\vdots\\
F_r(x,{\bf y})=0
\end{cases} 
$$
in $A[z]_{\alpha}\times A[z]_{\beta}^m$. By considering coefficients let us make the identification of $A$-modules $A[z]_{\alpha}\times A[z]_{\beta}^m=A^{N(\alpha,\beta)}$ where $N(\alpha,\beta)=\alpha+1 +  (\beta+1)m$. Thus, $G_{\alpha,\beta}\subseteq A^{N(\alpha,\beta)}$ and by writing  out the defining condition of $Y_{\alpha, \beta}$ in terms of coefficients of polynomials, we see that $G_{\alpha,\beta}$ is Diophantine over $A$ (in fact, it is quantifier-free definable).

Let $\pi_{\alpha,\beta}: A^{N(\alpha,\beta)}\to A[z]_\alpha=A^{\alpha+1}$ be the $A$-linear projection onto the first $\alpha+1$ coordinates. Then
$$
D_\alpha = \bigcup_\beta \pi_{\alpha,\beta}(G_{\alpha,\beta})\subseteq A^{\alpha+1}=A[z]_\alpha
$$
and the result follows.
\end{proof}

\begin{proof}[Proof of Corollary \ref{CoroZeros}] It suffices to show the case of $D_\alpha$. By Theorem \ref{ThmPoly} we can write $D_\alpha=\cup_{\beta=0}^\infty E_{\alpha,\beta}$ where $\beta$ varies over integers and each $E_{\alpha,\beta}\subseteq A^{\alpha+1}$ is Diophantine over $A$. Let us define
$$
X_{\alpha,\beta} = \{a\in A : \exists f\in E_{\alpha,\beta}, f(a)=0\}\subseteq A.
$$
Expressing $f(a)$ in terms of the coefficients of $f$ and powers of $a$ we see that $X_{\alpha,\beta}\subseteq A$ is Diophantine over $A$. The result follows from the fact that $\Zero(D_\alpha)=\cup_\beta X_{\alpha,\beta}$.
\end{proof}

\begin{proof}[Proof of Corollary \ref{CoroDeleteZQ}] Here we invoke Tarski's classical result on elimination of quantifiers over $\C$ and $\R$. A countable union in $\C$ of Diophantine sets over $\C$ is either countable or cofinite. A countable union in $\R$ of Diophantine sets over $\R$ is a countable union of points and intervals. Hence the result.
\end{proof}

\begin{proof}[Proof of Corollary \ref{CoroSubfield}] 
Suppose that $K$ is Diophantine in $L[z]$. Then $K=\cup_{\beta\in \N} X_\beta$ where $X_\beta\subseteq L$ is Diophantine over $L$. Let $L^{(\beta)}$ be the subfield of $L$ generated by $X_\beta$, thus, $K=\cup_{\beta\in \N} L^{(\beta)}$. By cardinality considerations, there is $\gamma$ such that $X_\gamma$ is infinite. Thus, $L^{(\gamma)}$ is a subfield of $L$ generated by the infinite Diophantine subset $X_\gamma\subseteq L$. From Theorem 2 in \cite{Fehm} (see also \cite{Anscombe}) it follows that $L^{(\gamma)}=L$, hence $K=L$; contradiction.
\end{proof}

We remark that a version of Corollary \ref{CoroSubfield} in positive characteristic (taking into account $p$-th power sub-fields) can be proved using results from \cite{Anscombe} instead of \cite{Fehm}.

The following lemma is well-known but we were unable to find a proof in the literature, so we provide one.
\begin{lemma}\label{LemmaAnalytic} There is a transcendental analytic function $f:\R\to \R$ with the properties that $f(\Q)\subseteq \Q$ and that the map $f|_\Q:\Q\to \Q$ is computable.
\end{lemma}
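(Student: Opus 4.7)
The plan is to construct $f$ as an infinite ``Newton series'' based at the rationals. Fix a computable enumeration $r_0, r_1, r_2, \ldots$ of $\Q$ without repetitions, and set
$$P_n(x) = \prod_{k=0}^{n-1}(x - r_k) \in \Q[x], \qquad P_0(x) = 1.$$
I will seek $f$ in the form $f(x) = \sum_{n \ge 0} c_n P_n(x)$ for positive rationals $c_n$ to be specified, so that all three required properties follow in turn.

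For analyticity I want the series to define an entire function. For $|x| \le n$ the trivial bound $|P_n(x)| \le \prod_{k=0}^{n-1}(n + |r_k|)$ holds, so taking for instance
$$c_n = \bigl(2^n \cdot \textstyle\prod_{k=0}^{n-1}(n + |r_k|)\bigr)^{-1},$$
which is manifestly a computable positive rational, we have $|c_n P_n(x)| \le 2^{-n}$ whenever $|x| \le n$. The series then converges uniformly on every disk of $\C$, each partial sum is a polynomial, so $f$ is entire and in particular real-analytic on $\R$.

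For $f(\Q)\subseteq\Q$ and computability of $f|_\Q$, note that for $x = r_m$ the factor $x - r_m$ appears in $P_n$ as soon as $n > m$, so $P_n(r_m) = 0$ for every $n > m$. Hence
$$f(r_m) = \sum_{n=0}^m c_n P_n(r_m) \in \Q$$
is a finite rational sum. Given any $x \in \Q$, one locates the unique index $m$ with $r_m = x$ by searching through the enumeration (all rational comparisons being decidable) and then evaluates the finite sum above; this exhibits $f|_\Q$ as computable.

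For transcendence I would invoke the classical fact that an entire function algebraic over $\C(x)$ must be a polynomial: any algebraic function grows at most polynomially at infinity, and an entire function of polynomial growth is a polynomial by the generalized Liouville theorem. So it suffices to see that $f$ is not a polynomial. But the $c_n$ are precisely the divided differences $[r_0,\ldots,r_n]f$ at the nodes $r_0,\ldots,r_n$, and these vanish once $n$ exceeds the degree of any polynomial; since every $c_n$ was chosen nonzero, $f$ cannot be a polynomial. The main obstacle is exactly this transcendence step: analyticity and rational-valuedness come essentially for free from the Newton-series setup, whereas ruling out that $f$ might satisfy some nontrivial algebraic relation of positive $Y$-degree is what requires the Liouville-growth input together with the nonvanishing of all the divided differences.
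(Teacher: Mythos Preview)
Your proof is correct and shares the paper's overall template---build $f$ as a rapidly convergent series $\sum c_n P_n$ with each $P_n$ vanishing at the first several rationals in a computable list---but the two constructions diverge in the details. The paper enumerates the rational \emph{squares} $q_1,q_2,\ldots$ and sets $P_n(z)=\prod_{j\le n}(q_j-z^2)$; this purchases a very short transcendence argument, since the substitution $z=it$ turns every factor into $q_j+t^2$ and yields an even power series in $t$ with all positive coefficients, visibly not a polynomial. You instead enumerate all of $\Q$ with linear factors $(x-r_k)$, and your transcendence step rests on recognising the $c_n$ as the Newton divided differences of $f$ at the nodes $r_0,\ldots,r_n$ (via uniqueness of the interpolating polynomial) together with the standard fact that divided differences of a polynomial vanish past its degree. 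Both proofs ultimately reduce transcendence to ``entire and algebraic implies polynomial''; you spell out the Liouville-growth justification, whereas the paper takes it for granted. Your route is arguably the more natural one, avoiding the clever imaginary substitution at the mild cost of invoking the divided-difference formalism.
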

\begin{proof} Let $q_1,q_2,...$ be a computable enumeration of the \emph{squares} in $\Q$. Let 
$$
P_n(z)=(q_1-z^2)\cdots (q_n-z^2)\in \Q[z]
$$ 
and let $A_n$ be a computable sequence of positive integers with $|P_n(z_0)|< A_n(|z_0|^{2n}+1)$ for all $z_0\in \C$ and all $n\ge 1$; a suitable choice of such integers $A_n$ can be easily computed from the sequence $q_n$. Then consider
$$
f(z)=\sum_{n=1}^\infty \frac{1}{(2n)!A_n}P_n(z).
$$
By comparison with the exponential function we see that $f$ is an analytic map $\R\to \R$; in fact, it defines an analytic function even over $\C$. Also, it maps $\Q$ to $\Q$ and $f|_\Q:\Q\to \Q$ is computable. It only remains to check that it is transcendental. For this, let $g:\C\to \C$ be the function
$$
g(t)=f(it)=\sum_{n=1}^\infty \frac{1}{(2n)!A_n}(q_1+t^2)\cdots (q_n+t^2)
$$
where $i=\sqrt{-1}\in \C$. The power series expansion of $g(t)$ is infinite with positive coefficients in even degree. Since $f(z)$ is analytic on $\C$, if it were algebraic it would be a polynomial and so would be $g(t)$, which is not the case.
\end{proof}

\begin{proof}[Proof of Corollary \ref{CoroClosure}] Let $f$ be as in Lemma \ref{LemmaAnalytic}. Since $f|_\Q:\Q\to\Q$ is computable, by Lemmas \ref{LemmaQzinRz} and \ref{LemmaDPRMinQz} we see that the following set is Diophantine over $\R[z]$:
$$
D=\{(q,f(q)) : q\in \Q\}\subseteq \R^2=\R[z]_1.
$$
The real closure of $D$ is the graph of $f$, which we denote by $\Gamma_f$. By Theorem \ref{ThmPoly}, if $\Gamma_f\subseteq \R^2$ were Diophantine over $\R[z]$ then it would be a countable union of semi-algebraic sets over $\R$, which is not possible because $f$ is transcendental and analytic.
\end{proof}


\section{Non-Diophantine sets of rational functions} 

In this section we let $L$ be a large uncountable field of characteristic $0$.

If $E\ge 0$ is a divisor in $\Pro^1$ defined over $L^{\alg}$ of degree $\ell$, then it can be seen as an $L^{\alg}$-rational point in the symmetric power $S^\ell \Pro^1$ and every $L^{\alg}$-rational point in $S^\ell \Pro^1$ can be seen in this way. In particular, given an $r\ge 1$ we have the subvariety $r\cdot S^\ell\Pro^1\subseteq S^{r\ell}\Pro^1$ obtained by multiplying divisors by $r$.

For $f\in L(z)$ let $\Pole(f)$ be the divisor of poles of $f$ in $\Pro^1$ over $L^{\alg}$. Writing $\ell=\deg(f)$ we note that $\deg\Pole(f)=\ell$ and therefore $\Pole(f)\in S^\ell\Pro^1(L^{\alg})$. 

For a set $D\subseteq L(z)$ and an integer $\ell\ge 1$ we write 
$$
\Pole_\ell(D)=\{\Pole(f) : \deg(f)=\ell\}
$$
which is a subset of $S^\ell\Pro^1(L^{\alg})$.

If $V$ is a variety over $L^{\alg}$ and $X\subseteq V$ is a subset, the Zariski closure of $X$ is denoted by $\overline{X}\subseteq V$. 

For a set $D\subseteq L(z)$ we let $D_\alpha =\{f\in D : \deg(f)\le \alpha\}$. Note that $L(z)_\alpha$ consists of the fractions of coprime polynomials of degree up to $\alpha$, hence, considering coefficients we see that $L(z)_\alpha$ has the structure of an algebraic variety over $L$. 

The Diophantine dimension $\ddim(D)$ of a set $D\subseteq L(z)$ is the smallest  $d\in\{-1,0,1,2,...,\infty\}$ such that for each $\alpha\ge 0$, the set $D_\alpha$ is contained in a countable union of sub-varieties of $L(z)_\alpha$ defined over $L$ of dimension less than or equal to $d$.

Let us recall the main result of Koll\'ar's work \cite{Kollar}. We state it in a less general form which is nonetheless easier to use over the field of rational functions $L(z)$.

\begin{theorem}[Koll\'ar]\label{ThmKollar} Let $D\subseteq L(z)$ be a Diophantine subset over $L(z)$. If $\ddim(D)=\infty$, then there are $a\ge 0$, $P_a\in S^a\Pro^1(L^{\alg})$, and $r\ge 1$ such that for all $m\ge 1$ we have
$$
P_a+ r\cdot S^m\Pro^1 \subseteq \overline{\Pole_{a+rm}(D)}
$$
inside $S^{a+rm}\Pro^1$.
\end{theorem}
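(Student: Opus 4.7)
The plan is to unpack the Diophantine definition of $D$ into an algebraic family, and then to understand the pole map on the parameter space of that family. Write $D = \{f\in L(z) : \exists \mathbf{g}\in L(z)^m,\ F_j(f,\mathbf{g})=0 \text{ for all } j\}$, and for bounds $\alpha,\beta\ge 0$ restrict the degrees of $f$ and of the witnesses $\mathbf{g}$. By clearing denominators in each $F_j$ and reading off the coefficients (in the spirit of the argument behind Theorem \ref{ThmPoly}), the set $D_{\alpha,\beta}\subseteq L(z)_\alpha$ of $f$'s of degree $\le\alpha$ admitting witnesses of degree $\le\beta$ is the image of an $L$-constructible set under an $L$-morphism, so $D_\alpha = \bigcup_\beta D_{\alpha,\beta}$ exhibits $D_\alpha$ as a countable ascending union of constructible images of algebraic sets. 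The hypothesis $\ddim(D)=\infty$ then forces $\dim\overline{D_{\alpha_k,\beta_k}}$ to be unbounded for suitable sequences $\alpha_k,\beta_k\to\infty$.

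The next step is to pass from large dimension on the source to a controlled image under the pole map $\pi_\ell : \{f\in L(z)_\alpha : \deg f = \ell\} \to S^\ell\Pro^1$, $f\mapsto\Pole(f)$, realized as an $L$-morphism on each open stratum of fixed degree. Fix an irreducible component $W\subseteq\overline{D_{\alpha_k,\beta_k}}$ of large dimension, set $Y_W := \overline{\pi_\ell(W)}\subseteq S^\ell\Pro^1$, and aim to show that $Y_W$ contains a coset of the form $P_a + r\cdot S^m\Pro^1$. The geometric mechanism one would exploit is that a solution $(f,\mathbf{g})\in L(z)^{1+m}$ of the system $F_j=0$ evaluated at $z_0\in\Pro^1$ gives an $L^{\alg}$-point of the $L$-scheme cut out by the $F_j$, and whenever $f$ acquires a new pole at $z_0$ the polynomial structure of the equations forces the coordinates $g_i$ to acquire poles of prescribed multiplicities. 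Reducing to a primitive case yields a common positive integer $r$ controlling this multiplicity, while a fixed base divisor $P_a$ collects the unavoidable ``ramification'' poles that every member of the family must carry.

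The main obstacle is to make precise this multiplicative structure on pole divisors and to show that it persists after Zariski closure, uniformly over all $m\ge 1$. A natural route is to resolve the incidence variety of $(f,\mathbf{g})$-pairs by a smooth projective model over $L$, perform a Stein factorization to separate the fixed and mobile parts of the pole divisor on $\Pro^1$, and then invoke the largeness and uncountability of $L$ to produce enough $L$-rational sections to fill out the target symmetric-power coset. Largeness is essential because it guarantees that any curve on the parameter space having one smooth $L$-point has infinitely many, preventing $Y_W$ from being trapped in a proper subvariety of $P_a + r\cdot S^m\Pro^1$; uncountability is what allows a single pair $(a,r)$ to serve for all $m\ge 1$ simultaneously, rather than being forced to depend on $m$ through a diagonal argument against countably many candidate subvarieties. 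Controlling the multiplicity $r$ and the base divisor $P_a$ uniformly in $m$, via the discrete invariants of the family $F_j$, is the technical heart of the argument and is exactly where I would expect to follow Koll\'ar's geometric machinery most closely.
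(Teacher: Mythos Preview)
The paper does not prove this statement at all: it is quoted from Koll\'ar's paper \cite{Kollar} (introduced with ``Let us recall the main result of Koll\'ar's work''), and no argument is given beyond the citation. So there is no ``paper's own proof'' to compare your proposal against.

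As for the proposal itself, it is an outline rather than a proof. The first paragraph (stratifying $D_\alpha$ by constructible images and using $\ddim(D)=\infty$ to get components of unbounded dimension) is fine and standard. But the decisive step---producing a single pair $(a,r)$ and a fixed $P_a$ such that $P_a + r\cdot S^m\Pro^1 \subseteq \overline{\Pole_{a+rm}(D)}$ for \emph{all} $m\ge 1$---is precisely what you defer to ``Koll\'ar's geometric machinery''. The heuristic you offer (poles of $f$ forcing poles of the $g_i$ with a common multiplicity $r$, plus a fixed ramification divisor $P_a$) is suggestive but does not by itself give the uniformity in $m$, nor does it explain why the closure $\overline{\Pole_\ell(D)}$ contains a full coset of $r\cdot S^m\Pro^1$ rather than, say, a proper subvariety of the correct dimension. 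In Koll\'ar's actual argument the key geometric input is a structure theorem for Diophantine subsets in terms of dominant rational maps from varieties to $\Pro^1$, together with an analysis of the induced map on divisors; the integers $a$ and $r$ arise from the degree data of that map, not from a Stein factorization as you suggest. Your invocation of largeness and uncountability is in the right spirit, but as written the proposal does not contain the mechanism that pins down $(a,r,P_a)$ uniformly, so it remains a plausible plan rather than a proof.
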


Let us record here a simple consequence.

\begin{corollary}\label{CoroAP}
Let $D\subseteq L(z)$ be a Diophantine subset over $L(z)$. If $\ddim(D)=\infty$, then the set
$$
\{\deg(f) : f\in D\}
$$
contains an infinite arithmetic progression.
\end{corollary}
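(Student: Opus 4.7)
The plan is to apply Theorem \ref{ThmKollar} directly and then extract the arithmetic progression from the degrees of the divisors in the guaranteed subvarieties.

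Since $\ddim(D)=\infty$, Theorem \ref{ThmKollar} produces an integer $a\ge 0$, a divisor $P_a\in S^a\Pro^1(L^{\alg})$, and an integer $r\ge 1$ such that for every $m\ge 1$ one has
$$
P_a + r\cdot S^m\Pro^1 \subseteq \overline{\Pole_{a+rm}(D)}
$$
inside $S^{a+rm}\Pro^1$. The left-hand side is a nonempty subvariety (it has dimension $m\ge 1$, but nonemptiness is all we need), and since the Zariski closure of the empty set is empty, we conclude that $\Pole_{a+rm}(D)$ is itself nonempty for every $m\ge 1$.

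By the very definition of $\Pole_{a+rm}(D)$, nonemptiness means there exists $f\in D$ with $\deg(f)=a+rm$. Letting $m$ range over all positive integers, we obtain that the set of degrees $\{\deg(f):f\in D\}$ contains the arithmetic progression
$$
\{a+rm : m\ge 1\},
$$
which is infinite since $r\ge 1$. This is the desired arithmetic progression.

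There is no real obstacle here beyond unpacking Koll\'ar's theorem: the only subtle point is to note that for $m\ge 1$ the variety $P_a+r\cdot S^m\Pro^1$ is nonempty, so that its containment in a Zariski closure forces $\Pole_{a+rm}(D)\ne\emptyset$, which in turn yields the existence of an element of $D$ of the prescribed degree.
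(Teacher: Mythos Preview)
Your proof is correct and follows exactly the same approach as the paper, which simply records the one-line observation that $\deg(f)=\deg\Pole(f)$ and leaves the unpacking of Theorem~\ref{ThmKollar} to the reader. You have spelled out that unpacking carefully (nonemptiness of $P_a+r\cdot S^m\Pro^1$, hence of $\Pole_{a+rm}(D)$, hence the existence of $f\in D$ of degree $a+rm$), but there is no difference in strategy.
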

\begin{proof} This is because $\deg(f)=\deg\Pole(f)$.
\end{proof}

The next result on elliptic curves is needed for the proof of Theorem \ref{ThmDeg}.

\begin{lemma} Consider the elliptic curve $\Ecal$ over $\Q(z)$ defined by the Weierstrass equation
\begin{equation}\label{EqnEC}
y^2 = x^3+zx+1
\end{equation}
and define the $\Q(z)$-rational point $P_1=(x,y)=(0,1)$. For $n\in \Z$ let $P_n=n\cdot P_1$ and for $n\ne 0$ let us define $x_n,y_n\in \Q(z)$ by $P_n=(x_n,y_n)$. Then the following holds:
\begin{itemize}
\item[(i)] For $n\ne 0$ we have the asymptotic formula $\deg(x_n)\sim n^2/2$;
\item[(ii)] For every field $k$ of characteristic $0$, the set of $k(z)$-rational points of $E$ is equal to 
$$
\{P_n : n\in \Z\}.
$$
\end{itemize}
\end{lemma}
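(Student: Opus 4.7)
Both parts of the lemma rest on standard structural results about the elliptic surface $\mathcal{S} \to \Pro^1$ whose generic fibre is $\Ecal$; the plan is to extract them from the theory of canonical heights and from the Shioda--Tate framework, after a preliminary analysis of the singular fibres. The $j$-invariant $j = 6912 z^3/(4z^3+27)$ is non-constant, so $\Ecal$ is non-isotrivial, and the duplication formula gives $x_2 = z^2/4$, so $P_1$ is non-torsion. Before attacking (i) and (ii) I would determine the Kodaira types of the singular fibres of the minimal regular model. The discriminant $\Delta = -16(4z^3+27)$ vanishes simply at each of the three roots of $4z^3+27$, giving three fibres of type $I_1$. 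At $z = \infty$, after passing to the minimal model via $x = u/w^2$, $y = v/w^3$, $z = 1/w$, one obtains $v^2 = u^3 + w^3 u + w^6$, and Tate's algorithm identifies the fibre as $I_3^*$ (eight components); in particular $\mathcal{S}$ is a rational elliptic surface with $\chi(\Ocal_\mathcal{S}) = 1$, as double-checked by the total discriminant degree $3 + 9 = 12 = 12\chi$.

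For (i), I would invoke the Silverman--Tate canonical height $\hat{h} : \Ecal(\overline{\Q(z)}) \to \R_{\ge 0}$, which satisfies $\hat{h}(nP) = n^2\hat{h}(P)$ and differs from the naive height $P \mapsto \deg(x(P))$ by a bounded function. Since $P_1$ is non-torsion, $\deg(x_n) = n^2 \hat{h}(P_1) + O(1)$, so the asymptotic $\deg(x_n) \sim n^2/2$ reduces to the identity $\hat{h}(P_1) = 1/2$. To pin this value down I would apply Shioda's explicit formula $\hat{h}(P) = 2\chi(\Ocal_\mathcal{S}) + 2(P \cdot O) - \sum_v c_v(P)$, where $O$ is the zero section and $c_v(P)$ is the local contribution at the fibre over $v$, using the fibre analysis above together with the location of $P_1$ on the minimal regular model at each singular fibre. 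As a sanity check, the parallelogram identity $\deg(x_{n+1}) + \deg(x_{n-1}) - 2\deg(x_n) = 2\hat{h}(P_1) + O(1)$ is corroborated by the values $\deg(x_1)=0$, $\deg(x_2)=2$, $\deg(x_3)=4$, $\deg(x_4)=8$, computed directly from the addition formulas.

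For (ii) the plan has two steps. First, show that $\Ecal(\overline{\Q}(z)) = \Z \cdot P_1$: the Shioda--Tate rank bound for a rational elliptic surface gives $\rk \MW(\Ecal/\overline{\Q}(z)) \leq 8 - \sum_v (m_v - 1) = 8 - 7 = 1$, and $P_1$ achieves it, while torsion-freeness follows from reduction at a good prime (say $p = 5$), where a direct count excludes non-trivial rational torsion on the reduced curve. Second, show that the Mordell--Weil group does not grow upon scalar extension from $\overline{\Q}(z)$ to $\bar{k}(z)$ for any algebraically closed $k \supseteq \Q$ of characteristic zero: sections of the elliptic surface $\mathcal{S}/\Pro^1_\Q$ are parametrized by a Hilbert-scheme-like moduli whose connected components are of finite type and zero-dimensional by the rigidity of sections on a non-isotrivial elliptic surface, and hence each component has the same set of points over any algebraically closed field of characteristic zero containing $\Q$. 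Descending from $\bar{k}(z)$ to $k(z)$ by Galois invariance gives $\Ecal(k(z)) = \Z \cdot P_1 = \{P_n : n \in \Z\}$.

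The main obstacle is the explicit computation in Shioda's height formula: determining the local contribution $c_\infty(P_1)$ at the fibre at infinity requires desingularizing the minimal Weierstrass model after the change of variables $w = 1/z$ (where $P_1$ lands on the singular point $(u,v) = (0,0)$ of $v^2 = u^3$) and tracking which component of the $I_3^*$ exceptional divisor the strict transform of $P_1$ meets. This resolution is the heart of Tate's algorithm at that place and, while standard, is the most delicate and computationally intensive step of the proof.
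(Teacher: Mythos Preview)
Your overall strategy matches the paper's: both arguments pass to the associated rational elliptic surface, read off the Mordell--Weil structure from the configuration of singular fibres (the paper cites entry 43 of the Oguiso--Shioda table, you unpack the Shioda--Tate bound and Shioda's height formula by hand), and then deduce (i) from the quadratic growth of the canonical height together with the boundedness of $h-\hat h$.

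There is, however, a concrete computational error that would derail your plan at exactly the step you flag as most delicate. The fibre at $z=\infty$ is of type $III^{*}$, not $I_3^{*}$. In the minimal model $v^{2}=u^{3}+t^{3}u+t^{6}$ one has $v(c_4)=3$ and $v(\Delta)=9$; in Tate's algorithm the cubic $P(T)=T^{3}+a_{2,1}T^{2}+a_{4,2}T+a_{6,3}$ reduces to $T^{3}$ (triple root), and at the next step $\pi^{4}\nmid a_{4}$, which forces type $III^{*}$. The coincidence that $III^{*}$ and $I_3^{*}$ both have eight components and $v(\Delta)=9$ explains why your rank bound and Euler-characteristic check went through, but the local height contributions are different: for $III^{*}$ the unique non-identity contribution is $3/2$, giving $\hat h(P_1)=2-3/2=1/2$ as required, whereas for $I_3^{*}$ the possible contributions are $1$ or $7/4$, and neither yields $1/2$. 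So if you carried out Shioda's formula with your fibre type you would not recover $\hat h(P_1)=1/2$, and both (i) and your argument that $P_1$ generates would collapse. Once you correct the fibre type, your outline goes through and is essentially the paper's proof; the paper short-circuits your separate torsion and base-change discussions by simply reading off the full Mordell--Weil group $A_1^{\vee}$ (hence torsion-free) from the Oguiso--Shioda table and noting at the outset that it suffices to work over $\C$.
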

\begin{proof} The proof relies on the theory of elliptic surfaces; we refer the reader to \cite{SS} for an introduction. Furthermore, it suffices to work over $\C$.

First, if $P=(a,b)$ is an affine $\C(z)$-rational point in $\Ecal$, then the naive height of $P$ is defined by $h(P)=\deg(a)$ and the canonical height of $P$ is defined by
$$
\hat{h}(P)=\lim_{n\to \infty} \frac{h(2^nP)}{4^n}\in \R_{\ge 0}.
$$
The canonical height is quadratic; in particular, for every integer $n$ we have $\hat{h}(nP)=n^2\hat{h}(P)$. Furthermore, the difference $|h(P)-\hat{h}(P)|$ is bounded by a constant independent of $P$.

Let us now consider the elliptic surface $\pi: X\to \Pro^1$ defined by the Weierstrass equation \eqref{EqnEC}.

The Weierstrass equation  \eqref{EqnEC} is minimal and the elliptic surface is rational (cf. Section 8 in \cite{SS}). One computes that it has three bad fibres of type $I_1$ and one of type $III^*$ (at $z=\infty$). Hence, the structure of its group of sections over $\C$ is given by the entry 43 of the table in the Main Theorem of \cite{OguisoShioda}, which is the lattice $A_1^\vee$. That the section induced by $P_1$ generates is thus reduced to showing that its height is $1/2$, which is a computation left to the reader ---one can use Silverman's algorithm \cite{Silverman} which is explained over $\C(z)$ in \cite{Kuwata}, but note that the canonical height in \emph{loc.cit} is normalized to be half of the one we use here. This proves (ii). 

Item (i) now follows from the quadratic nature of the canonical height and the fact that its difference with the naive height is bounded.
\end{proof}

From the previous lemma we immediately get:

\begin{corollary}\label{CoroQuad} Let $k$ be a field of characteristic $0$. There is a set $D\subseteq k(z)$ which is Diophantine over $k(z)$ and satisfies that the set $\{\deg(f) : f\in D\}\subseteq \N$ is formed by a sequence with quadratic growth. 
\end{corollary}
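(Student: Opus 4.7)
The plan is to take for $D$ the set of $x$-coordinates of the nontrivial $k(z)$-rational affine points on the elliptic curve $\Ecal$ of the previous lemma, and to read off the conclusion directly from parts (i) and (ii).

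More precisely, I would set
$$
D = \{a\in k(z) : \exists b\in k(z),\ b^2 = a^3 + za + 1\}\subseteq k(z).
$$
This is manifestly Diophantine over $k(z)$ since it is the positive existential projection of a single polynomial equation over $k(z)$ (with coefficients in $\Z[z]\subseteq k(z)$). An element $a\in k(z)$ lies in $D$ precisely when $a$ is the first coordinate of some affine $k(z)$-rational point of $\Ecal$.

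Now I would apply part (ii) of the lemma: the affine $k(z)$-rational points of $\Ecal$ are exactly the points $P_n = (x_n, y_n)$ for $n\in \Z$, $n\neq 0$ (the only non-affine rational point being the identity $P_0 = O$). Hence
$$
D = \{x_n : n\in\Z,\ n\neq 0\}.
$$
By part (i) of the lemma, $\deg(x_n)\sim n^2/2$ as $|n|\to \infty$, so the set $\{\deg(f) : f\in D\}\subseteq \N$ is the set of values of a sequence with quadratic growth, as required.

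There is no real obstacle here; the content has been absorbed by the previous lemma (whose proof relied on the classification of rational elliptic surfaces and Silverman's height computation). The only minor point to watch is that the Diophantine definition above returns affine points only, which is why $P_0$ is automatically excluded and $D$ really coincides with $\{x_n : n\neq 0\}$ rather than with a larger or smaller set.
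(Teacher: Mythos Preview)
Your proposal is correct and is exactly the intended argument: the paper states that Corollary~\ref{CoroQuad} follows immediately from the preceding lemma, and you have simply made explicit the Diophantine set $D$ of $x$-coordinates of affine $k(z)$-points on $\Ecal$ and read off the quadratic growth of $\deg(x_n)$ from parts (i) and (ii). The only cosmetic remark is that $x_{-n}=x_n$, so in fact $D=\{x_n:n\ge 1\}$, but this does not affect anything.
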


\begin{proof}[Proof of Theorem \ref{ThmDeg}] Suppose that $\Deg$ is Diophantine over $L(z)$. Let $D$ be as in Corollary \ref{CoroQuad} with $k=L$. Then the set
$$
D'=\{f\in L(z) : \exists g\in D, \deg(f) = \deg(g)\}
$$
is Diophantine over $L(z)$. Since $\{\deg(g) : g\in D\}$ does not contain infinite arithmetic progressions, Corollary \ref{CoroAP} implies that $\ddim(D')$ is finite, but this is false because $\{\deg(g) : g\in D\}$ is an infinite subset of $\N$ and $D'$ is the union of $\{f\in L(z) : \deg f=n\}$ for $n\in \{\deg(g) : g\in D\}$.
\end{proof}

\begin{proof}[Proof of Theorem \ref{ThmN1}] Note that for each $n\ge 1$ we have $L[z]\subseteq \Pcal_n$, so, $\ddim(\Pcal_n)=\infty$. Furthermore, note that for every $\ell$
$$
\Pole_\ell(\Pcal_n)\subseteq Y_{n,\ell} :=\{E\in \Div(\Pro^1) : E\ge 0, \deg(E)=\ell, \mbox{ and }\#\supp(E)\le n\}.
$$
The set $Y_{n,\ell}\subseteq S^\ell\Pro^1$ is closed in the Zariski topology since it is defined by various equations that determine that some of the points in the support of a divisor collide.

Theorem \ref{ThmKollar} gives that there is $P_a\in S^a\Pro^1(L^{\alg})$ and $r\ge 1$ such that 
$$
P_a+r\cdot S^{n+1}\Pro^1\subseteq Y_{n,a+r(n+1)}
$$
But this is not possible: there are divisors in the set of the left hand side with at least $n+1$ points in their support.
\end{proof}

\begin{proof}[Proof of Theorem \ref{ThmVal}] Fix $0< \epsilon \le 1$. The multiplicity of a divisor $E$ on $\Pro^1$ at a point $x$ is denoted by $\mult_x(E)$. 

If $f\in L(z)$ is written in the form $f=p/q$ with $p,q\in L[z]$ coprime, we note that the condition $\deg q\le (1-\epsilon) \deg p$ implies 
$$
\begin{aligned}
\mult_\infty(\Pole(f)) &= \deg(p)-\deg(q)\\
& \ge \epsilon\deg(p)= \epsilon\deg(f)\\
&= \epsilon\deg(\Pole(f)).
\end{aligned}
$$

From this, we observe that 
$$
\Pole_\ell (\Vcal_{\epsilon}(L))\subseteq Z_{\epsilon,\ell}
$$
where
$$
Z_{\epsilon, \ell} = \{E\in \Div(\Pro^1) : E\ge 0, \deg E=\ell, \mbox{ and } \mult_\infty(E)\ge \epsilon\ell\}\subseteq S^\ell\Pro^1.
$$
We note that $Z_{\epsilon,\ell}$ is a Zariski closed subset of $S^\ell\Pro^1$. 

Since $L[z]-\{0\}\subseteq \Vcal_{\epsilon}(L)$ we see that $\ddim(\Vcal_{\epsilon}(L))=\infty$. Thus, Theorem \ref{ThmKollar} gives some $P_a\in S^a\Pro^1(L^{\alg})$ and $r\ge 1$ such that for every $m\ge 1$ we have
$$
P_a+r\cdot S^m\Pro^1\subseteq \overline{\Pole_{a+rm} (\Vcal_{\epsilon}(L))}\subseteq Z_{\epsilon,a+rm}.
$$
Choose a point $Q\ne \infty$ in $\Pro^1$ and let $E_m=P_a+rmQ\in P_a+r\cdot S^m\Pro^1$. Note that $\mult_\infty(E_m)=\mult_\infty(P_a)\le a$. Now choose 
$$
m>\frac{a}{r}\cdot \frac{1-\epsilon}{\epsilon}
$$
and note that since $E_m\in Z_{\epsilon,a+rm}$ we have
$$
\mult_\infty(E_m)\ge \epsilon\deg(E_m)= \epsilon(a+rm)>\epsilon\left(a+a\cdot \frac{1-\epsilon}{\epsilon}\right) = a.
$$
This is a contradiction.
\end{proof}


\section{Koll\'ar's conjecture}\label{SecKollarConjProofs}

\begin{proof}[Proof of Proposition \ref{PropConj}] Item (i) is clear.

For item (ii) notice that $\C[z]\subseteq \Der$ while $1/(z-\lambda)\notin \Der$ for each $\lambda\in \C$.

Item (iii) is clear for $x\ne \infty$, and for $x=\infty$ we take reciprocals.

For item (iv), suppose that $\Ncal$ is Diophantine and observe that the set
$$
\Ncal'=\{f\in \C(z): f\in \C\mbox{ or both }f\mbox{ and }f+4\mbox{ are not squares}\}
$$
is also Diophantine. We get $\C[z]\subseteq \Ncal'$ while for every $\lambda\in \C^\times$ we have $(\lambda z)^{-2}-2+(\lambda z)^2\notin \Ncal'$; this is not possible.

Item (v) is obtained by taking reciprocals.
\end{proof}

\begin{proof}[Proof of Proposition \ref{PropCampana}] 

Suppose that $\ell=\infty$. If $S\ne \emptyset$ we may assume (up to a change of variables in $\C(z)$) that $\infty\in S$. Then $\Ccal_{S,\infty}$ contains $\C[z]$ but its complement is infinite; take for instance the functions $1/(z-\lambda)$ for $\lambda\in \C-S$.

Suppose now that $1<\ell < \infty$. Then $\Ccal_{S,\ell}$ contains an open subset of $\C[z]_{\alpha}$ for every $\alpha \ge \ell$ (namely, the set of polynomials of exact degree $\alpha$). Conjecture \ref{ConjKollar} then gives that the complement of $\Ccal_{S,\ell}$ is finite, which is false since $\ell\ge 2$; again, take the functions $1/(z-\lambda)$ as $\lambda\in \C$ varies.
\end{proof}

\begin{proof}[Proof of Proposition \ref{PropPosChar}] The set $D=k(z)-k(z^p)$ admits the following description: 
$$
D=\left\{ f\in k(z) : \exists f_0,...,f_{p-1}\in k(z)\mbox{ such that }f_i\ne 0 \mbox{ for some }1\le i\le p-1\mbox{ and }f=\sum_{j=0}^{p-1} z^j f_j^p\right\}
$$
from which we see that $D$ is Diophantine in $k(z)$. The set $D$ has the required properties.
\end{proof}

\section{Acknowledgments}

We thank Arno Fehm for answering several questions on large fields, Cec\'ilia Salgado for providing some references on elliptic surfaces, and Thomas Scanlon for asking a question that motivated this research.

N.G.-F. was supported by ANID Fondecyt Regular grant 1211004 from Chile.

H.P. was supported by ANID (ex CONICYT) Fondecyt Regular grant 1190442 from Chile.

This material is based upon work supported by the National Science Foundation under Grant No. DMS-1928930 while the authors participated in the program Definability, Decidability, and Computability in Number Theory, part 2,  hosted by the Mathematical Sciences Research Institute in Berkeley, California, during the Summer of 2022.


\end{document}